\newtheorem*{theorem*}{Theorem}
\newtheorem{theorem}{Theorem}[section]
\newtheorem{lemma}[theorem]{Lemma}
\newtheorem*{proposition*}{Proposition}
\newtheorem{proposition}[theorem]{Proposition}
\newtheorem{corollary}[theorem]{Corollary}
\newcommand{\ignore}[1]{}
\newcommand{\enote}[1]{} \newcommand{\knote}[1]{}
\newcommand{\rnote}[1]{}
\DeclareMathOperator{\argmax}{argmax}
\DeclareMathOperator{\Binom}{Binom}
\renewcommand{\P}{{\bf P}}
\newcommand{\E}{{\bf E}} 
\newcommand{\Var}{{\bf Var}}
\newcommand{\eps}{\epsilon}
\newcommand{\N}{\mathbb N} 
\newcommand{\Z}{\mathbb Z}
\newcommand{\CalA}{{\mathcal{A}}}
\newcommand{\CalH}{\mathcal{H}}
\newcommand{\half}{{\textstyle \frac12}}
\renewcommand{\phi}{\varphi}
\newcommand{\copyableTheorem}[2]{
 \newtheorem*{newthm#1}{Theorem \ref{thm#1}}
 \begin{newthm#1}
   {#2}
 \end{newthm#1}
 \expandafter\newcommand\expandafter{\csname thm#1\endcsname}{
   \begin{theorem}
     \label{thm#1}
     {#2}
   \end{theorem}
 }
}
\newcommand{\FullVersionOnly}[1]{{#1}}
\newcommand{\ShortVersionOnly}[1]{}
\begin{document}
\title{Majority Dynamics and Aggregation of Information in Social
  Networks} \author{Elchanan Mossel\footnote{Weizmann Institute and
    U.C. Berkeley. E-mail: mossel@stat.berkeley.edu. Supported by a
    Sloan fellowship in Mathematics, by BSF grant 2004105, by NSF
    Career Award (DMS 054829), by ONR award N00014-07-1-0506 and by
    ISF grant 1300/08}, Joe Neeman\footnote{UC Berkeley. E-mail:
    joeneeman@gmail.com.}~ and Omer Tamuz\footnote{Weizmann
    Institute. E-mail: omer.tamuz@weizmann.ac.il. Supported by ISF
    grant 1300/08. Omer Tamuz is a recipient of the Google Europe
    Fellowship in Social Computing, and this research is supported in
    part by this Google Fellowship.}}

\maketitle
\begin{abstract}
  Consider $n$ individuals who, by popular vote, choose among $q \geq
  2$ alternatives, one of which is ``better'' than the others.  Assume
  that each individual votes independently at random, and that the
  probability of voting for the better alternative is larger than the
  probability of voting for any other.  It follows from the law of
  large numbers that a plurality vote among the $n$ individuals would
  result in the correct outcome, with probability approaching one
  exponentially quickly as $n \to \infty$.

  Our interest in this paper is in a variant of the process above
  where, after forming their initial opinions, the voters update their
  decisions based on some interaction with their neighbors in a social
  network. Our main example is ``majority dynamics'', in which each
  voter adopts the most popular opinion among its friends. The
  interaction repeats for some number of rounds and is then followed
  by a population-wide plurality vote.

  The question we tackle is that of ``efficient aggregation of
  information'': in which cases is the better alternative chosen with
  probability approaching one as $n \to \infty$? Conversely, for which
  sequences of growing graphs does aggregation fail, so that the wrong
  alternative gets chosen with probability bounded away from zero?
  
  We construct a family of examples in which interaction prevents
  efficient aggregation of information, and give a condition on the
  social network which ensures that aggregation occurs.
  

  For the case of majority dynamics we also investigate the
  question of unanimity in the limit. In particular, if the voters'
  social network is an expander graph, we show that if the initial
  population is sufficiently biased towards a particular alternative
  then that alternative will eventually become the unanimous
  preference of the entire population.
\end{abstract}

\ShortVersionOnly{
\thispagestyle{empty}
\newpage
\setcounter{page}{1}
}

\section{Introduction}

The mathematical study of voting systems began as early as 1785, when
the Marquis de Condorcet\cite{MdC} observed what is essentially a
special case of the weak law of large numbers: suppose there is a
large population of voters, and each one independently votes
``correctly'' with probability $p > 1/2$. Then as the population size
grows, the probability that the outcome of a majority vote is
``correct'' converges to one.  Thus, information is ``efficiently
aggregated''. 

In this work, we study a simple model of voter interaction, in which
voters choose an independent random opinion initially, but then modify
that opinion iteratively, based on what their friends think. Thus
correlation between votes is introduced ``naturally'', through
interaction. Our main example of interaction is majority dynamics,
where at each round each voter adopts the opinion of the majority of
its neighbors.  The basic question that we address is that of
efficient information aggregation: for which modes of interaction is
information aggregated efficiently, and for which is it not?

Additionally, we study some conditions for the achievement of
unanimity, when the graph of social ties is an expander, and when
agents use majority dynamics.

\subsection{Model}
We consider an election in which a finite set $V$ of voters must
choose between $q \geq 2$ alternatives, which we will take to be the elements
of $[q] = \{0, 1, \dots, q-1\}$. The voters are connected by an
undirected social network graph $G=(V,E)$. Denote the neighbors of $v
\in V$ by $N_v$.

Each voter $v \in V$ will be initialized with a preference $X_v(0) \in
[q]$, picked independently from a distribution $\P$ over
$[q]$. 

At time $t \in \{1, \dots, T\}$, $v$ will update her opinion to
$X_v(t)$ based on what her friends' opinions at times $t-1$ and
earlier.  At time $T$, an election will take place and a winner $Y$
will be declared. Note that $Y$ is a deterministic function of the
initial votes $(X_v(0))_{v \in V}$.

A simple and important example is {\em majority dynamics} where $q=2$:
At each iteration of the dynamics, each individual $v$ sets her vote
to equal the most popular vote among her neighbors in the previous
iteration (we elaborate below on the handling of ties). 
\begin{align*}
  X_v(t) = \argmax_{a \in \{0,1\}}|\{w| X_w(t-1)=a,\; w \in N_v\}|.
\end{align*}
At some large time $T$ an election by plurality takes place, so that
the winner is
\begin{align*}
 Y = \argmax_{a \in \{0,1\}}|\{v |X_v(T)=a\}|. 
\end{align*}

Note that the majority rule (or more generally the plurality rule, in
the case of more than two alternatives) is {\em fair} and {\em
  monotone}: It is {\em fair} in the sense that is does not, as an
election system, treat one alternative differently than another; it is
invariant to a renaming of the alternatives. It is {\em monotone} in
the sense that having extra supporters cannot hurt an alternative's
case.

As a generalization of majority dynamics, we allow any updating of
opinions and any election system, provided that they are {\em fair}
and {\em monotone}. For example, an individual may give more weight to
some of her friends than to others, and the final election could an
Electoral College system. In Sec.~\ref{sec:threshold} we further relax
the fairness condition.

\subsection{Overview of the results}
\subsubsection{Social types}
Our study of information aggregation will utilize the idea of
\emph{social type}: we divide the voters $V$ into a partition $\CalA$
of social types, and ask that any two voters of the same social type
play the same r\^ole in the election process.  More precisely we
require that if the labels are removed from all individuals then it is
impossible to tell apart two individuals of the same social type. This
shall be rigorously defined in Section~\ref{sec:outline}.

In the case of majority dynamics, social types are induced by the
automorphisms of the graph $G$: $u,w \in V$ are of the same social
type if there exists an automorphism $\tau$ of $G$ such that
$\tau(u)=w$. Intuitively, this means that in an unlabeled drawing of
$G$ it is impossible to say which is $u$ and which is $w$; $u$ and $w$
are of the same social type if they play the same r\^ole in the
geometry of $G$, and hence play the same r\^ole in the election
process.

\subsubsection{Aggregation of information}
Without loss of generality, we will assume that alternative 0 is the
best alternative, and that the initial opinion of each voter is
slightly biased towards alternative 0: we take $X_v(0)$ to be a
multinomial random variable such that $\P(X_v(0) = 0) > \P(X_v(0) =
j)$ for any $j \ne 0$.

Although this bias could be very small, the law of large numbers
guarantees that with enough voters, the outcome of a plurality vote at
time 0 would choose the correct alternative, except with exponentially
low probability.  We refer to this property as {\em efficient
  aggregation of information}.

In Section~\ref{sec:aggregation}, we study if information is still
efficiently aggregated if we hold the vote at time $T$ instead, after
allowing the agents to interact.  One of our main results (stated
formally in Theorem~\ref{thmQuantitativeTransitive} below) is that
information is efficiently aggregated when each social type has many
members. In particular, we show that the probability of choosing the
correct alternative approaches one as the size of the smallest social
type approaches infinity, with a polynomial dependence.

This implies that in majority dynamics on a transitive graph, in which
case all voters are of the same social type, the outcome of the final
vote will be zero, except with probability that decreases polynomially
with the number of voters.

\subsubsection{Lack of Aggregation}

Perhaps surprisingly, the condition requiring increasing size of each
social type is necessary.  Indeed in Section~\ref{sec:agg-fails} we
provide an example with $q=2$ alternatives, majority dynamics and a
final majority vote, which results in the wrong outcome, with constant
probability regardless of the size of the population!


\subsubsection{Wider agreement, unanimity and expanders}

In Section~\ref{sec:threshold}, we ask when, following $T$ periods of
interaction, a large part of the population is in agreement.

Focusing on the case $q=2$ and majority dynamics, we show that the
proportion of the population that votes for alternative 0 at time $T$
is at least as large as the initial bias towards alternative
0. 

We push the agreement threshold to its extreme in
Section~\ref{sec:unanimity}, where we show that if the social network
is an expander graph, the mode of interaction is based on plurality,
and there is enough initial bias then eventually the entire population
will agree on alternative 0.

\subsection{Related work}
Our work is closely related to work of Kalai~\cite{Kalai:04} who
studies social choice using tools of discrete Fourier analysis.  Kalai
proves that any {\em binary} unbiased and monotone election system
aggregates information efficiently, given that all the voters have low
influence on the outcome.

Our work expands on this work in several directions: First, we
elucidate the role of voters types in this setup by showing that
having large number of voters of each type implies aggregation and
that without this condition aggregation may not occur. Second, we go
beyond the binary world and explore general outcome spaces. Finally
the questions of higher thresholds and unanimity were not considered
before.

Kanoria and Montanary~\cite{KM:trees} study majority dynamics with two
alternatives on regular (infinite) tree graphs, giving conditions
which lead to convergence to unanimity. Their work can also be
interpreted as a study of a zero temperature spin glasses, a model
also studied by Howard~\cite{howard2000zero} on 3-regular trees and
Fontes, Schonmann and Sidoravicius~\cite{fontes2002stretched} on
$\Z^d$.

Berger~\cite{berger2001dynamic} gives an example of a series of graphs
in which majority dynamics results in the adoption, by all
individuals, of the opinion of the individuals in a constant size
group, provided {\em they} all agree. Thus these graphs could serve in
place of our example (Section~\ref{sec:agg-fails}), showing how
aggregation fails when there is a small social type. We provide our
example for completeness, and because it is somewhat simpler.

Our work is related to the widely studied family of Gossip-based
protocols on networks (see, e.g., Bawa et
al.~\cite{bawa2003estimating}, Kempe et al.~\cite{kempe2003gossip},
and a survey by Shah~\cite{shah2009gossip}). The goal there is to
design and/or analyze distributed, repeated algorithms for the
aggregation of information on networks. For example, in the classical
DeGroot model~\cite{DeGroot:74} agents ``vote'' with a real number,
which they calculate at each iteration by averaging the votes of their
neighbors from the previous iteration. The agents all converge to the
same number, which is a good approximation of the average of the
initial votes only if degrees are low~\cite{golub2010naive}, or if,
indeed, the size of the smallest social type is large. This
model is fairly easy to analyze, since the votes in each iteration are
a linear function of the votes in the previous iteration. Majority
dynamics is a natural discretization of this process, but has proven
to be more resistant to analysis. Indeed the non-linearity of the
dynamics results not only in major technical challenges but also in
different behaviors of the two models.

Another related strain of models is that of Bayesian learning. Here
the agents optimize their votes to those which are the most likely to
be correct, given a prior over correct alternatives, an initial
private signal and the votes of their neighbors in previous rounds
(see, e.g.,~\cite{MosselSlyTamuz11:arxiv}). Perhaps surprisingly, this
dynamic is not necessarily monotone and therefore its analysis
requires different tools. The agents calculation there are more
complicated, and hence more difficult to analyze. On the other hand,
the optimality of the agents' actions makes the model amenable to
martingale arguments, which don't apply in the case of majority
dynamics.

Our main proof uses tools from the field of Fourier analysis of
Boolean functions on the discrete hypercube. 
In particular we use and extend results of Kahn, Kalai and
Linial~\cite{KaKaLi:88}, Friedgut and Kalai~\cite{FK:96}, a strong
version of the KKL theorem by Talagrand~\cite{Talagrand:94} and a
recent generalization by Kalai and Mossel\cite{KM:10}.

\subsection{Acknowledgments}
We would like to thank Miklos Racz for his careful reading of the manuscript
and his suggestions.

\section{Definitions and results}\label{sec:outline}
\subsection{Majority Dynamics}
Let $V$ be a finite set of individuals. Let $G=(V,E)$, an undirected
finite graph, represent the network of social connections of $V$. We
denote the neighbors of $v \in V$ by $N_v$. We allow $G$ to contain
self-loops, so that $v$ may or may not belong to $N_v$.

Let $X_v(t) \in \{0,1\}$ denote $v$'s vote at time $t \in
\{0,\ldots,T\}$. Let each $X_v(0)$ be chosen from some distribution
$\P$ over $\{0,1\}$, independently and identically for all $v \in
V$. Note that once the initial votes $(X_v(0))_{v \in V}$ are chosen,
the process is deterministic.

At times $t > 0$, $v$ updates its vote to equal the majority opinion
of its neighbors in the previous round. If the number of neighbors is
even then we either add or remove $v$ itself to the set of neighbors
$N_v$, to avoid ties.
\begin{align*}
  X_v(t) = \argmax_{a \in \{0,1\}}|\{w| X_w(t-1)=a,\; w \in N_v\}|.
\end{align*}
After some number of rounds $T$ an election by majority takes
place. We denote the winner by $Y_T$:
\begin{align*}
 Y_T = \argmax_{a \in \{0,1\}}|\{v |X_v(T)=a\}|. 
\end{align*}
To avoid ties in the final election, we assume $|V|$ is odd.

We next define {\em social types}. Recall that $\tau:V \to V$ is a
graph automorphism of $G=(V,E)$ if $(u,v) \in E \leftrightarrow
(\tau(u),\tau(v)) \in E$. We say that $u$ and $v$ are of the same
social type if there exists a graph automorphism that maps $u$ to
$v$. Informally, this means that $u$ and $v$ play the same r\^ole in
the geometry of the graph; it is impossible to tell which is which if
the labels are removed from the vertices. It is easy to see that
``being of the same social type'' is an equivalence relation. We
denote by $\CalA(G)$ the partition of the vertices of $G$ into social
types. We denote by $m(G)$ the size of the smallest social type:
\begin{align*}
  m(G) = \min_{A \in \CalA(G)} |A|.
\end{align*}

Our main result in this section is that information is aggregated
efficiently, provided that each social type has many members.  To
state our result, we first define the efficiency of an aggregation
procedure.  Let $\P_\delta$ be the probability distribution $\{0,1\}$
such that $\P_\delta(0) = \half(1+\delta)$ and $\P_\delta(1) =
\half(1-\delta)$. Then the \emph{efficiency} $\mu_{\delta}(G,T)$ of
majority dynamics on $G$ up to time $T$ is
\begin{equation*}
  \mu_\delta(G,T)=\P_\delta[Y_T = 0].
\end{equation*}
Note that in a slight abuse of notation we use $\P_\delta$ to denote both the
distribution over $\{0,1\}$ from which $X_v(0)$ is chosen, and the
measure on $(X_v(t))_{v\in V, 1\leq t \leq T}$ and $Y_T$ which is
induced by $\P_\delta$.

Our main result for this section is the following:
\begin{theorem*}
  There exists a universal constant $C > 0$ such that for any graph $G$
  \begin{align*}
    \mu_\delta(G,T) \ge 1 - C \exp\left(-C \frac{\delta \log m(G)}{\log (1/\delta)}\right).
  \end{align*}
\end{theorem*}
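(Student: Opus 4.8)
The plan is to recognize the final outcome as a monotone, automorphism-symmetric Boolean function and to prove a sharp threshold for it, using influence inequalities of KKL/Talagrand type together with the orbit structure coming from the graph automorphisms.

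First I would encode the process as a Boolean function. Setting $z_v=\ind[X_v(0)=0]$, both majority dynamics and the final majority vote make $f(z):=\ind[Y_T=0]$ a \emph{monotone} function of $z$: an induction on $t$ shows that switching an initial vote to $0$ can only increase the number of voters holding opinion $0$ at every later time, hence can only help $0$ win. Moreover $f$ is invariant under the action of $\mathrm{Aut}(G)$, since permuting the initial votes by an automorphism permutes all later configurations and leaves the global tally unchanged. Consequently $\Inf_v(f)$ is constant on each social type, so the social type of any given vertex contributes at least $m(G)$ equal terms to the total influence. Under $\P_\delta$ each $z_v$ equals $1$ with probability $p=\half(1+\delta)$; writing $u(p):=\P_p[f=1]$ for the measure of $\{f=1\}$ under the $p$-biased product measure, we have $u(\half(1+\delta))=\mu_\delta(G,T)$, while fairness (which gives $f(\bar z)=1-f(z)$, valid since $|V|$ is odd) pins down the base point $u(\half)=\half$.

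The heart of the argument is a differential inequality for $u$. By the Margulis--Russo formula, $u'(p)=\sum_v \Inf_v^{(p)}(f)$, the total $p$-biased influence, and I would lower-bound this by a two-case analysis on the largest influence $\beta:=\max_v \Inf_v^{(p)}(f)$. If $\beta$ is small, a Talagrand/KKL-type inequality in its $p$-biased form (this is where \cite{Talagrand:94}, \cite{KaKaLi:88}, \cite{FK:96} and the generalization \cite{KM:10} enter) yields $\sum_v \Inf_v^{(p)}(f)\gtrsim \Var_p(f)\,\log(1/\beta)$. If instead $\beta$ is large, then by symmetry all of the $\ge m(G)$ vertices in the social type achieving $\beta$ have influence $\beta$, so $\sum_v \Inf_v^{(p)}(f)\ge m(G)\,\beta$. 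Balancing the two regimes converts $\log(1/\beta)$ into $\log m(G)$ and gives $u'(p)\gtrsim \Var_p(f)\,\log m(G)=u(p)(1-u(p))\,\log m(G)$, up to a correction depending on $p$.

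Finally, since $f$ is $\{0,1\}$-valued we have $\Var_p(f)=u(p)(1-u(p))$, so the inequality is logistic; integrating from $p=\half$ (where $u=\half$) up to $p=\half(1+\delta)$ shows that $\log\frac{u}{1-u}$ increases by an amount of order $\delta\log m(G)$, which rearranges to the claimed bound $1-\mu_\delta(G,T)\le C\exp(-C\,\delta\log m(G)/\log(1/\delta))$. I expect the middle step to be the main obstacle: one must invoke a genuinely $p$-biased version of Talagrand's inequality rather than the clean uniform-measure statement, control the largest influence $\beta$ uniformly over the entire window $p\in[\half,\half(1+\delta)]$, and track the $p$-dependent constants honestly. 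The extra $\log(1/\delta)$ in the denominator is precisely the loss incurred by the available $p$-biased influence inequality relative to the idealized uniform one.
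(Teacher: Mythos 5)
Your proposal is essentially the paper's own proof, specialized to $q=2$: the paper proves the general statement (Theorem~\ref{thmQuantitativeTransitive}) by exactly this scheme --- encode the outcome as a monotone, symmetric indicator $g = 1_{(f=0)}$, note that influences are equal within a social type so the total influence is at least $m$ times the maximal one, run a two-case analysis (the orbit bound when the maximal influence exceeds $(\log m)/m$, the Kalai--Mossel/Talagrand inequality of Theorem~\ref{thm:kalai-mossel} otherwise), convert total influence into a derivative via Russo's formula, and integrate the differential inequality across the bias window. Your variations --- parametrizing directly by the $p$-biased measure (which coincides with the paper's interpolating family $\P_t$ when $q=2$), pinning $u(\half)=\half$ by antisymmetry instead of the paper's $\P_s(g)\ge 1/q$, and integrating a logistic rather than a linear inequality --- are cosmetic; also, no uniform control of $\beta$ over the window is needed, since the two-case analysis is applied pointwise in $p$.

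One correction to where you locate ``the main obstacle.'' The $\log(1/\delta)$ in the statement is \emph{not} the price of the $p$-biased influence inequality. Over your window $p\in[\half,\half(1+\delta)]$ that price is $\log\frac{1}{\min(p,1-p)}\asymp\log\frac{2}{1-\delta}$, which is $O(1)$ for $\delta$ bounded away from $1$; in that regime your argument proves the \emph{stronger} bound $1-\mu_\delta(G,T)\le C\exp(-c\,\delta\log m(G))$, which implies the stated one. The $\log(1/\delta)$ in the paper arises only in the general-$q$ proof, where some alternative may carry probability as small as order $\delta/q$ along the interpolation path, forcing the factor $\log(2q/\delta)$ in Theorem~\ref{thm:kalai-mossel}. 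The regime where your correction $\log\frac{2}{1-\delta}$ blows up, namely $\delta\to 1$, cannot be rescued by any proof, because the statement read literally fails there: for a fixed graph, say a triangle with self-loops ($m=3$), the all-ones initial condition gives $1-\mu_\delta \ge \bigl(\tfrac{1-\delta}{2}\bigr)^{|V|}$, which is polynomial in $1-\delta$, whereas $C\exp\bigl(-C\delta\log m/\log(1/\delta)\bigr)$ decays like $\exp(-C'/(1-\delta))$ since $\log(1/\delta)\to 0$. (The paper's own patch for degenerate distributions, the $\tilde\P$ construction at the end of its proof, also breaks down for $q=2$ and $\delta$ near $1$: it produces a distribution with zero gap.) So the statement should be read with $\delta$ bounded away from $1$, or with $\log(1/\delta)$ replaced by something like $\log(4/\delta)$ --- which is what both proofs actually deliver; under that reading your proof is complete once you handle large $\delta$ by monotonicity, replacing $\delta$ with $\min(\delta,\half)$ at the cost of a constant.
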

In particular, $\mu_\delta(G)$ approaches one as $m(G)$ tends to
infinity. Note that the bound does not depend on $T$. This theorem is
a special case of Theorem~\ref{thmQuantitativeTransitive}, which is
stated below.

In the other direction, we provide an example showing what can go
wrong when $m(G_n)$ does not grow to infinity.
\begin{theorem}\label{thm:example}
  For any $\delta > 0$, there exists a sequence of graphs $G_n$, whose
  sizes converge to infinity, such that
  \begin{align*}
    \sup_n \sup_{T \ge 1} \mu_\delta(G_n,T) < 1.
  \end{align*}
\end{theorem}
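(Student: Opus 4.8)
The plan is to exhibit a graph containing a small, structurally distinguished group of voters whose unanimous opinion is propagated by the dynamics to (essentially) the whole population and then \emph{maintained} forever. Concretely, I would build $G_n$ around a ``committee'' $S$ of constant size $k$ (say $k$ odd), wired so that $S$ is a single social type of size $k$ while the remaining ``periphery'' vertices form their own type(s), so that $m(G_n)=k$ stays bounded even though $|V(G_n)|\to\infty$. Since each voter is initialized i.i.d.\ from $\P_\delta$, the event $E=\{X_v(0)=1\ \text{for all }v\in S\}$ has probability $\bigl(\tfrac{1-\delta}{2}\bigr)^{k}=:c>0$, a constant independent of $n$. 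The goal is to design the wiring so that on $E$ we have $Y_T=1$ for \emph{every} $T\ge 1$; this immediately yields $\mu_\delta(G_n,T)=\P_\delta[Y_T=0]\le 1-c$ uniformly in $n$ and $T$, which is exactly $\sup_n\sup_{T\ge1}\mu_\delta(G_n,T)\le 1-c<1$.

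The key technical tool I would use is the \emph{monotonicity} of majority dynamics: the per-vertex update is a monotone Boolean function of the previous configuration, so if $X(0)\le X'(0)$ pointwise then $X(t)\le X'(t)$ for all $t$. Thus on the event $E$ the configuration dominates the ``seed-only'' trajectory in which $S$ starts at $1$ and the entire periphery starts at $0$; it therefore suffices to analyze that single worst-case trajectory and show that the set of $1$-voters grows to a majority (ideally to all of $V$, which is always a fixed point) without any vertex ever reverting. The intended construction is a layered amplifier $S=L_0,L_1,\dots,L_r$ with increasing sizes, where each vertex of $L_{i+1}$ is adjacent to a majority of neighbors drawn from the inner layers $L_0\cup\dots\cup L_i$, engineered so that once $L_0,\dots,L_i$ are all $1$ the layer $L_{i+1}$ turns $1$ at the next step while the inner layers stay $1$. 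Steps (a) define this wiring and check that $S$ is a genuine bounded social type; (b) invoke monotonicity to reduce to the seed-only trajectory; (c) prove the monotone, oscillation-free spread to a $1$-majority; (d) conclude the uniform bound on $\mu_\delta$.

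The hard part will be step (c), and it is forced on us by two competing constraints. Because the graph is undirected and the update is an \emph{unweighted} majority, a constant-size set cannot out-vote a large neighborhood in a single round: any committee directly adjacent to the bulk of a large crowd is itself swayed back toward the crowd's (slightly correct) initial majority. Hence the influence cannot act in one step but must be amplified through the hierarchy of layers, and one must simultaneously defeat the period-two oscillations that naive star- or complete-bipartite-type wirings produce (there the crowd copies the committee with a one-step delay, the committee is reset to the crowd's $\approx 0$ majority, and the outcome is correct on even rounds). The delicate point is the timing/feedback along the back-edges: I must show the newly-ignited layers stabilize via mutual support faster than the committee's reversion can propagate outward, so that the $1$-region only \emph{grows}. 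A final subtlety specific to the $\sup_T$ in the statement is that $Y_T=1$ must hold already for \emph{small} $T$, not merely in the limit; so the spread must reach a majority quickly (in a number of rounds bounded independently of $n$) and then rest at a stable all-$1$ (or $1$-majority) fixed point, giving a lower bound on $\P_\delta[Y_T=1]$ that is uniform over both $n$ and $T$. Constructions realizing such a constant-size dynamic monopoly are known (e.g.\ Berger~\cite{berger2001dynamic}); the plan is to give a streamlined layered version tailored to make the monotone, non-oscillating, fast spread transparent.
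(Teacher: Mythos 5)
Your high-level outline (a constant-size committee $S$, the conditioning event $E$ of probability $\bigl(\tfrac{1-\delta}{2}\bigr)^k$, and a single event on which $Y_T=1$ simultaneously for all $T\ge 1$) matches the paper's, and the monotone-domination observation is correct as far as it goes. But the reduction to the seed-only trajectory opens a gap that cannot be closed in the form you specify, because it discards exactly the resource the paper's proof uses. Write $U_0=S$ and $U_{t+1}=\{v:\ \text{more than half of } N_v \text{ lies in } U_t\}$ for the seed-only trajectory. Your step (c) demands both that the $1$-region only grows and that it reaches a majority within a number of rounds bounded independently of $n$ (as it must: $T=1$ is included in $\sup_{T\ge 1}$, so you need a $1$-majority already at time $1$). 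These demands are incompatible for a constant-size seed. If $U_t\subseteq U_{t+1}$, then every $v\in U_t$ has more than half of its neighbors in $U_t$, hence $\deg(v)<2|U_t|$; since every vertex of $U_{t+1}$ needs at least one neighbor in $U_t$, we get $|U_{t+1}|\le \sum_{v\in U_t}\deg(v) < 2|U_t|^2$, and therefore $|U_T|$ is bounded by a function of $k$ and $T$ alone, never a majority once $n$ is large. Conversely, if you insist on $|U_1|>n/2$, then each ignited vertex has a majority of its (fewer than $2k$) neighbors in $S$, so the seed's total degree exceeds $n/2$; every seed vertex of degree at least $2k$ then reverts to $0$ at time $1$, and a vertex of $U_1$ can stay $1$ at time $2$ only if its $S$-majority and its $U_1$-majority intersect, i.e.\ only if it has a neighbor in $S\cap U_1$ — a set of seeds of degree less than $2k$ and total degree less than $2k^2$ — so all but at most $2k^2$ ignited vertices revert at time $2$. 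That is precisely the period-two oscillation you hoped to engineer away; in the seed-only worst case it is unavoidable. Citing Berger does not repair this: his dynamic monopoly conquers the graph only after a number of rounds growing with $n$, so for fixed small $T$ you would still need a separate argument, necessarily using the periphery's randomness.

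The paper's construction resolves the tension by refusing the worst-case reduction: the periphery's own initial $1$-voters are essential. There, $B$ is a disjoint union of $n$ cliques of size $1/(1-p)+1$ (with self-loops), and the committee $A$, of size $1/(1-p)$, is joined to all of $B$. Conditioned on $A$ being unanimously $1$, any clique containing at least one initial $1$-voter becomes all-$1$ at time $1$ (that single internal voter plus $A$ outnumbers the rest of the clique), and an all-$1$ clique outnumbers $A$ forever after, so it stays all-$1$ no matter what $A$ does. Each clique is ignitable with probability about $1-1/e$, so by Hoeffding a majority of cliques — hence of all vertices — is all-$1$ at every time $T\ge 1$, on an event whose probability is bounded below independently of $n$ and $T$. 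So the missing idea is to use the committee only as a tie-breaker that ignites locally self-sustaining structures seeded by the periphery's random $1$-voters, rather than as a monopoly that must conquer an all-$0$ periphery; your step (c), as specified, is not merely difficult but impossible.
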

That is, there is some $\eps >0$ such that for any $n$ and $T$ the
probability of choosing the wrong alternative is at least $\eps$.

\subsection{Monotone Dynamics}
\label{sec:monotone-dynamics}
In this section we extend the definitions and results of the previous
section to a large class of update rules and election systems, and a
choice between more than two alternatives.

Let $[q] = \{0, 1, \dots, q-1\}$ be the set of alternatives. The
initial votes $X_v(0)$ are, as above, chosen i.i.d.\ from some $\P$,
which is now a distribution over $[q]$. As before, the process is
deterministic once the initial votes are chosen.

Let the history of $v$'s neighborhood before time $t$ be denoted by
$H_v(t) = (X_w(s))_{s < t, w \in N_v}$. Then $[q]^{[t] \times N_v}$ is
the set of possible histories of the neighborhood of $v$ before time
$t$.

For each $a \in [q]$ and $k \in \N$ we define a relation $\ge_a$ on
$[q]^k$ as follows.  Let $x, x' \in [q]^k$. We write $x' \ge_a x$ if,
for all $i \in [k]$ it holds that
\begin{align*}
  x'_i \neq x_i \rightarrow x'_i = a.    
\end{align*}
Alternatively, if a vector of votes $x$ is changed to $x'$ such that
$x' \ge_a x$, then for each $i$ either $x_i$ is unchanged, or it is
changed to $a$.  Note that when $q=2$ then $x' \ge_1 x$ reduces to the
usual $x' \ge x$, i.e., $x'_i \ge x_i$ for $i \in [k]$.

For $0 < t \leq T$, let $X_v(t)$ be determined as follows.  Let the
{\em mode of interaction} be a collection of functions $m_{v,t}:
[q]^{[t] \times N_v} \to [q]$, with
\begin{align*}
  X_v(t) = m_{v,t}(H_{v,t}). 
\end{align*}
These functions are generalization of the majority function used in
majority dynamics. As such, we require that they meet the following
conditions:
\begin{enumerate}
\item They are {\em fair}, or symmetric with respect to the
  alternatives: for all permutations $\sigma$ on $[q]$ and all
  histories $h \in [q]^{[t] \times N_v}$, $\sigma(m_{v,t}(h)) =
  m_{v,t}(\sigma(h))$, where $\sigma(h)$ is the result of applying
  $\sigma$ to each element of $h$.
\item They are {\em monotone}: for every pair $h, h' \in [q]^{[t]
    \times N_v}$, if $m_{v,t}(h) = a$ and $h' \ge_a h$ then
  $m_{v,t}(h') = a$.
\end{enumerate}
An example would be majority dynamics, i.e., the case where $q=2$,
$|N_v|$ is odd for all $v$, and $m_{v,t}$ is equal to the most popular
opinion among $X_w(t-1)$, where $w \in N_v$. A different simple
example is the case that $X_v(t)$ is simply equal to $X_v(t-1)$,
unless all of $v$'s neighbors agree in time $t-1$ on some alternative
$a$, in which case $X_v(t)=a$. That is, the agents do not change their
opinions unless their friends unanimously agree on a different
opinion.

Following $T$ rounds of interaction, we apply an {\em election system}
function $g: [q]^V \to [q]$ to $(X_1(T),\ldots,X_{|V|}(T))$, to
determine the chosen alternative $Y$:
\begin{align*}
  Y = g(X_1(T),\ldots,X_{|V|}(T)).
\end{align*}
This is again a generalization of a majority vote, and as such we
require that $g$ satisfy the same fairness and monotonicity
properties:
\begin{enumerate}
\item It is {\em fair}, or symmetric with respect to the alternatives:
  for all permutations $\sigma$ on $[q]$, $\sigma(g(a)) =
  g(\sigma(a))$, where $\sigma(a)$ is the result of applying $\sigma$
  to each element of $a$, $a_v$.
\item It is {\em monotone}: for every pair $x, x' \in [q]^V$, if $g(x) = a$
  and $x' \ge_a x$ then $g(x') = a$.
\end{enumerate}
Examples of such functions are the simple plurality function and
various recursive plurality (i.e., electoral college-like)
functions. Another important example is the dictator function, in
which $g(a) = a_v$, for some fixed $v$.

The whole process of social interaction and elections can be viewed as
a single function from the original signals $\{X_v(0)|v \in V\}$ to
$[q]$. We denote this function by $f:[q]^V \to [q]$, and call it the
{\em aggregation function}, so that
\begin{align*}
  Y = g(X_1(T),\ldots,X_{|V|}(T)) = f(X_1(0),\ldots,X_{|V|}(0)).
\end{align*}
Note that for brevity's sake we sometimes write the above as $Y=f(X)$.
It is easy to see that the aggregation function $f$ has the same
properties we that require from the election system $g$: it is
monotone and fair.

Finally, the concept of {\em social types} is, in the case of monotone
dynamics, related to the symmetries of the aggregation function,
rather than those of the graph. We first define $\CalH(f)$, the
symmetry group of the aggregation function, as the group of
permutations $\tau$ on $V$ that satisfy the following condition: for
every $a \in [q]^V$ it holds that $f(\tau(a)) = f(a)$, where
$\tau(a)_v = a_{\tau(v)}$.

It is easy to verify that $\CalH(f)$ is indeed a group, with
composition as the operation: for any $\tau,\sigma \in \CalH(f)$ it
holds that $f(\tau(\sigma(a))) = f(\tau(a)) = f(a)$, and hence
$\tau\sigma \in \CalH(f)$. Also, $f(a) = f(\tau(\tau^{-1}(a))) =
f(\tau^{-1}(a))$, and so $\tau^{-1}$ is also in $\CalH(f)$. 

The set of {\em Social types} is simply $V/\CalH(f)$, the set of
orbits of $V$ under the action of $\CalH(f)$.  I.e., $\CalA(f)$ is the
unique partition of $V$ such that $v,w \in V$ are of the same social
type iff $\exists \tau \in \CalH(f)$ such that $\tau(v) = w$.

The definition of $m(G,T)$ now naturally becomes the following.  Given
an aggregation function $f:[q]^V \to [q]$, denote by $m(f)$ the size
of the smallest social type induced by $f$:
\begin{align*}
  m(f) = \min_{A \in \CalA(f)} |A|.
\end{align*}

Our main result of this section, which is a strict generalization of
that of the previous, is again that information is aggregated
efficiently provided that each social type has many members.  In the
case of monotone dynamics, our definition of the efficiency of
aggregation is the following.  Let $\mathcal{P}_\delta$ be the set of
probability distributions $\P$ on $[q]$ under which $\P(0) \ge \P(i) +
\delta$ for all $i = 1, \dots, q-1$.  Then the \emph{efficiency}
$\mu_{\delta}(f)$ of a function $f: [q]^n \to [q]$ is defined by
\begin{equation*}
  \mu_\delta(f)=\inf_{\P \in \mathcal{P}_\delta} \P[f(X) = 0].
\end{equation*}

Our main result for this section is the following:
\copyableTheorem{QuantitativeTransitive}{ Let $f:[q]^V \to [q]$ be a
  monotone and fair aggregation function, and let $m=m(f)$ be the size
  of the smallest social type.  Then 
  \begin{align*}
    \mu_\delta(f) \ge 1 - C_q \exp\left(-C_q \frac{\delta \log m}{\log (1/\delta)}\right),
  \end{align*}
  for some $C_q$ that depends only on $q$.
}

Theorem~\ref{thmQuantitativeTransitive} is a statement about functions
$f$ such that $m(f)$ is large. For $q=2$ and odd $n$ it is easy to
find examples of such functions - the majority function, for example.
However, not for every value of $q$, $n$ and $m \leq n$ there exists a
fair and monotone aggregation function $f_{q,n}$ such that $m =
m(f_{q,n})$. In particular, it is not clear for which values of $q$
and $n$ there exists a fair and monotone aggregation function
$f_{q,n}$ that is {\em transitive}, i.e., $m(f_{q,n})=n$.

The challenge is to break ties in a way that preserves fairness and
transitivity, and indeed it seems that no simple, immediate examples
exist. We provide the following example of a fair, transitive and
monotone function, for any $q \geq 2$ and $n$ prime and larger than
$q$. See further discussion in~\cite{nima}.

\begin{proposition}
  For all $q \geq 2$ and $n$ prime and strictly larger than $q$, there
  exists a monotone, fair and transitive aggregation function $f :
  [q]^n \to [q]$.
\end{proposition}

\subsection{Unanimity results}

Here we consider any number of alternatives $q$, but specialize to the
case where the mode of interaction is given by simple plurality. It is
easy to construct examples showing that our earlier assumptions on the
structure of the network do not imply that the whole electorate will
eventually agree. Indeed, there could be a small clique of voters who
are well-connected to each other but poorly connected to the rest of
the population. These voters could forever maintain an opinion
contrary to that of their peers. One way to avoid this situation is to
ask that the social network be an expander graph.

Let $M$ be the adjacency matrix of a $d$-regular graph $G$. We say
that $G$ is a $\lambda$-expander graph if the second-largest absolute
eigenvalue of $M$ is no larger than $\lambda$.

Although we will not require any knowledge of expander graphs here, we
refer the uninitiated reader to~\cite{HLW:expanders} for a survey on
the topic. For now, it is enough to know that ``good'' expanders have
$\lambda = O(\sqrt d)$.

\begin{theorem}
  Let $G_n$ be a sequence of $d$-regular $\lambda$-expanders whose
  size converges to infinity.  Suppose that $\frac{\lambda}{d} \le
  \frac{3}{16}$ and
  \begin{equation}\label{eq:gap}
    \P(0) \ge \P(i) + \frac{c \sqrt {\log q}}{\sqrt d}
  \end{equation}
  for all $i \ne 0$.  For $v$ a vertex in $G_n$ let $X_v(t)$ be drawn
  i.i.d.\ from $\P$, and let the mode of interaction be majority
  dynamics. Then with probability converging to 1 as $n \to \infty$,
  there exists a time $T$ such that $X_v(T)=0$ for all $v \in V$.
\end{theorem}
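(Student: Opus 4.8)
The plan is to run the dynamics and track $W_t$, the set of voters holding a non-zero opinion at time $t$, and to show that $\lvert W_t\rvert$ is driven to $0$. I would split the argument into a deterministic \emph{contraction step}, showing that $\lvert W_t\rvert$ shrinks geometrically once it drops below $n/4$, and a probabilistic \emph{base step}, showing that a single round of the dynamics already forces $\lvert W_1\rvert \le n/4$ with high probability. Combining the two, $\lvert W_t\rvert$ decreases to $0$; since the sizes are non-negative integers shrinking by a constant factor, they reach $0$ after $O(\log n)$ rounds, at which point every voter holds $0$ (and this persists), giving the desired time $T$.

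For the contraction step I would first record the key deterministic fact: a voter $v$ can hold a non-zero opinion at time $t+1$ only if at least $d/2$ of its neighbors hold non-zero opinions at time $t$. Indeed, if some $i\neq 0$ weakly beats $0$ in $v$'s neighborhood, then writing $n_a$ for the number of neighbors holding $a$ we have $n_i\ge n_0$ and $n_i\le d-n_0$, forcing $n_0\le d/2$ and hence $d-n_0\ge d/2$ neighbors in $W_t$. Consequently $e(W_{t+1},W_t)\ge \tfrac{d}{2}\lvert W_{t+1}\rvert$. I would then apply the expander mixing lemma $e(S,T)\le \tfrac{d}{n}\lvert S\rvert\lvert T\rvert+\lambda\sqrt{\lvert S\rvert\lvert T\rvert}$; using $\lvert W_t\rvert\le n/4$ to absorb the first term gives $\tfrac{d}{4}\lvert W_{t+1}\rvert\le \lambda\sqrt{\lvert W_{t+1}\rvert\lvert W_t\rvert}$, i.e. $\lvert W_{t+1}\rvert\le \tfrac{16\lambda^2}{d^2}\lvert W_t\rvert\le \tfrac{9}{16}\lvert W_t\rvert$, where the last inequality is exactly where the hypothesis $\lambda/d\le 3/16$ enters. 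Thus the threshold $n/4$ is maintained and $\lvert W_t\rvert$ contracts.

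For the base step I would estimate $\lvert W_1\rvert$. Fixing $v$, its $d$ neighbors are i.i.d.\ $\sim\P$, so for a fixed $i\neq 0$ the event that $i$ weakly beats $0$ is, by Hoeffding applied to the mean-$(-(\P(0)-\P(i)))$ sum of $d$ variables in $[-1,1]$, of probability at most $\exp\!\big(-d(\P(0)-\P(i))^2/2\big)\le q^{-c^2/2}$, using the gap $\P(0)-\P(i)\ge c\sqrt{\log q}/\sqrt d$. A union bound over the $q-1$ alternatives gives $\Pr[v\in W_1]\le (q-1)\,q^{-c^2/2}\le 1/8$ once $c$ is a large enough universal constant, so $\E\lvert W_1\rvert\le n/8$. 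Since changing one voter's initial opinion alters the time-$1$ opinion of at most its $d$ neighbors, $\lvert W_1\rvert$ has bounded differences $d$, and McDiarmid's inequality yields $\Pr[\lvert W_1\rvert>n/4]\le \exp(-\Omega(n/d^2))\to 0$.

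The main obstacle is the base step, and specifically the interaction between the per-vertex estimate and the dependence across overlapping neighborhoods. The gap condition is calibrated precisely so that the Hoeffding exponent $d(\P(0)-\P(i))^2=\Theta(\log q)$ dominates the $\log(q-1)$ cost of the union bound over alternatives, keeping the per-vertex failure probability a small constant \emph{uniformly in $q$}; this is the role of the $\sqrt{\log q}/\sqrt d$ scaling. The remaining difficulty is that $\lvert W_1\rvert$ is not a sum of independent indicators, which I would resolve through the bounded-differences inequality, whose applicability rests on $d$-regularity. By contrast the contraction step is purely deterministic, and the constant $3/16$ is chosen exactly to push the expander-mixing factor $16(\lambda/d)^2$ strictly below one.
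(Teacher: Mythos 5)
Your proof is correct, and it takes a genuinely different route from the paper's. The paper never establishes a contraction: its Proposition~\ref{prop:unstable} (mixing lemma applied to both sides of $E(A_1(t+1),A_1(t)) \ge E(A_1(t+1),A_0(t))$) only shows that a large majority keeps the minority below $n/8$ forever (Corollary~\ref{cor:almost-consensus}), which is a stability statement, not a decay statement; to pass from a persistently small minority to unanimity, the paper invokes the Goles--Olivos theorem that the deterministic dynamics converge to a cycle of period at most two --- extended to $q$ alternatives in Proposition~\ref{prop:periodic-q} via a Lyapunov function, which in turn requires perturbing the edge weights to rationally independent values to break ties --- and then runs a contradiction argument on the limit-cycle partition $A_{00}, A_{01}, A_{10}, A_{11}$ (Proposition~\ref{prop:consensus}). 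You replace all of this with a single sharper use of the mixing lemma: since every vertex of $W_{t+1}$ sends at least $d/2$ edges into $W_t$, and since the main term $\frac{d}{n}\lvert W_{t+1}\rvert \lvert W_t\rvert$ can be absorbed once $\lvert W_t\rvert \le n/4$, you obtain the multiplicative bound $\lvert W_{t+1}\rvert \le \frac{16\lambda^2}{d^2}\lvert W_t\rvert \le \frac{9}{16}\lvert W_t\rvert$, so the minority dies out in $O(\log n)$ rounds with no periodicity input and with no sensitivity to how ties are resolved (your key inequality $n_i \ge n_0$ holds under any tie-breaking rule, so the weighted-perturbation machinery becomes unnecessary). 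Your probabilistic base step --- per-vertex Hoeffding with a union bound over the $q-1$ rival alternatives, then McDiarmid over the $n$ independent initial opinions, valid because each initial opinion influences at most $d$ time-$1$ votes --- is also spelled out more carefully than in the paper, which merely asserts that at time $1$ ``with high probability most of the vertices will prefer $a$''; McDiarmid is in fact the same tool the paper uses later in its large-girth corollary. What the paper's longer route buys is the periodicity result itself (a structural fact of independent interest) and a consensus criterion stated in terms of an additive imbalance $N_0(t)-N_1(t) \ge 4\lambda n/d$; what yours buys is a shorter, self-contained argument, an explicit $O(\log n)$ bound on the time to unanimity, and freedom from tie-breaking assumptions.
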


The dependency on $d$ in Eq.~\eqref{eq:gap} is possibly not tight.  In
particular, if $q=2$ and the girth of $G_n$ tends to infinity with
$n$, then a result of Kanoria and Montanari~\cite{KM:trees} implies
that we can replace $\sqrt d$ by $d^\alpha$ for any $\alpha > 0$.

\subsection{Higher threshold results}
For $q=2$, consider the election system $g_\alpha(x) = 1(\sum_i x_i
\ge (1-\alpha) n)$.  When $\alpha = 1/2$, this is just the simple
majority function. It is monotone and symmetric and so
Theorem~\ref{thmQuantitativeTransitive} applies. When $\alpha > 1/2$, however,
$g_\alpha$ is no longer symmetric in the alternatives. We prove that
the final bias is as large as the expected initial bias.

\begin{theorem}\label{thm:threshold}
  Let $f_{n,\alpha}$ be a fair and monotone aggregation function with
  election system $g_{n,\alpha}$ on the graph $G_n$ after running $T$
  rounds of interaction. If $m(f_{n,\alpha}) \to \infty$ and $\alpha <
  \frac{1}{2} + \frac{\delta}{2}$ then for any $T \in \N$,
  \begin{align*}
    \lim_{n \to \infty} \mu_\delta(f_{n,\alpha}) = 1.    
  \end{align*}
\end{theorem}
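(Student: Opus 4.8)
The plan is to reduce to the worst-case initial bias and then marry the ``agreement'' result of this section to the sharp-threshold machinery already used to prove Theorem~\ref{thmQuantitativeTransitive}. Write $\Phi$ for the map taking the initial votes to the time-$T$ configuration, so that $f_{n,\alpha}=g_{n,\alpha}\circ\Phi$. Since the mode of interaction is fair and monotone, $\Phi$ is monotone and self-dual (flipping every initial vote flips every time-$T$ vote), and $g_{n,\alpha}$ is monotone and symmetric in the voters; hence $f_{n,\alpha}$ is monotone with $m(f_{n,\alpha})\to\infty$. For $q\in[0,1]$ let $\P_q$ be the product measure in which each voter starts at $1$ with probability $q$, put $W_1=|\{v:X_v(T)=1\}|$, and set $G(q)=\P_q[f_{n,\alpha}=1]=\P_q[W_1\ge(1-\alpha)n]$. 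Because $f_{n,\alpha}$ is monotone toward $0$, the quantity $\P[f_{n,\alpha}=0]$ is increasing in $\P(0)$, so the infimum defining $\mu_\delta(f_{n,\alpha})$ is attained at the least-biased distribution $\P(0)=\half(1+\delta)$. It therefore suffices to show $G(q^*)\to0$ for $q^*=\half(1-\delta)$.

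Next I would invoke the sharp threshold. Since every social type of $f_{n,\alpha}$ has size at least $m:=m(f_{n,\alpha})$ and influences are constant on the orbits of $\CalH(f_{n,\alpha})$, each coordinate influence satisfies $\Inf_i^{(q)}(f_{n,\alpha})\le\tfrac1m\sum_j\Inf_j^{(q)}(f_{n,\alpha})=\tfrac1m G'(q)$. Feeding this into the Talagrand/KKL lower bound on total influence \cite{Talagrand:94} (exactly as in the proof of Theorem~\ref{thmQuantitativeTransitive}) gives $G'(q)\ge c\,G(q)(1-G(q))\log m$ wherever $G$ is bounded away from $0$ and $1$. Integrating this differential inequality shows that for every fixed $\eps>0$ the set $\{q:\eps\le G(q)\le 1-\eps\}$ is an interval of length $O(\log(1/\eps)/\log m)$; that is, the transition window of $G$ has width $w_m(\eps)\to0$ as $n\to\infty$. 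This is the Friedgut--Kalai phenomenon \cite{FK:96}, and crucially it depends only on $m$, not on $n$.

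The heart of the matter is locating this window. Set $\eta=\half(1+\delta)-\alpha$, which is positive precisely by the hypothesis $\alpha<\half+\tfrac{\delta}{2}$, and note $q^*+\eta=1-\alpha$. Fix any $\eps\in(0,\eta/(1-\alpha))$ and suppose, toward a contradiction, that $G(q^*)\ge\eps$. Let $q_2=\inf\{q:G(q)\ge1-\eps\}$; since $G$ is increasing and its transition window has width $w_m(\eps)$, we get $q_2\le q^*+w_m(\eps)$, which is $<\half$ once $n$ is large because $q^*=\half-\tfrac{\delta}{2}$. On one hand $G(q_2)\ge1-\eps$ forces $\E_{q_2}[W_1]\ge(1-\alpha)n\,G(q_2)\ge(1-\alpha)(1-\eps)n$. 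On the other hand, $q_2<\half$ means alternative $0$ is initially favored under $\P_{q_2}$, so the agreement result of this section (a consequence of monotonicity and self-duality of $\Phi$, hence valid for any fair monotone interaction) gives $\E_{q_2}[W_1]\le q_2 n$. Combining, $(1-\alpha)(1-\eps)\le q_2\le q^*+w_m(\eps)$; letting $n\to\infty$ so that $w_m(\eps)\to0$ yields $(1-\alpha)(1-\eps)\le q^*=(1-\alpha)-\eta$, i.e. $\eps\ge\eta/(1-\alpha)$, contradicting the choice of $\eps$. Thus $G(q^*)<\eps$ for all large $n$; since $\eps$ ranged over all of $(0,\eta/(1-\alpha))$, we conclude $G(q^*)\to0$ and hence $\mu_\delta(f_{n,\alpha})=1-G(q^*)\to1$.

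The step I expect to be the main obstacle is the window-location argument of the previous paragraph: the agreement result controls only the \emph{expectation} of $W_1$, whereas the sharp-threshold theorem controls the probability $G$, so the two estimates must be combined carefully, and one must check that the reference bias $q_2$ genuinely lies below $\half$ (where the agreement bound applies). A secondary technical point is ensuring the influence/Talagrand estimate holds uniformly in $q$ across the window even though $f_{n,\alpha}$ is \emph{not} self-dual; this is exactly where the hypothesis $m(f_{n,\alpha})\to\infty$, rather than self-duality, does all the work, just as in Theorem~\ref{thmQuantitativeTransitive}.
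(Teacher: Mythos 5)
Your proposal is correct, and it is built from the same two ingredients as the paper's own proof (given as Corollary~\ref{cor:threshold}): the sharp-threshold differential inequality obtained from Russo's formula together with the KKL/Talagrand bound and the orbit argument (i.e., the machinery of Theorem~\ref{thmQuantitativeTransitive}), and the expectation bound $\E \sum_v X_v(T) \le (1-p)|V|$ for $p \ge \half$ (the paper's Corollary~\ref{cor:in-expectation}, which you cite rather than reprove, correctly identifying it as a consequence of monotonicity plus fairness of the interaction). The difference lies only in how the two are combined. The paper applies Markov's inequality at \emph{every} bias $r$ in the fixed interval $[\alpha+\eps, p]$ to get the pointwise variance bound $\Var_{\P_r}(g_\alpha) \ge \eps\, \P_r(g_\alpha = 1)$, and then integrates the resulting linear differential inequality over that interval, obtaining the quantitative estimate $\P_p(Y=0) \ge 1 - (1-\eps)\exp(-C\eps^2 \log m)$. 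You instead integrate the logit form of the same inequality to get a Friedgut--Kalai window-width statement, and then pin down the window's location by a contradiction that uses Markov plus the expectation bound at the single point $q_2$ (the top of the window). Both combinations work; the paper's yields an explicit rate in $m$, yours yields only the limit, which is all the theorem claims.

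One step of yours needs a patch. Your window-width bound $w_m(\eps) = O(\log(1/\eps)/\log m)$ holds with a constant that is uniform only while the bias stays bounded away from $0$ and $1$ (the KKL/Talagrand constant degrades near the edges of the cube), yet in your argument the location of the window --- in particular the inequality $q_2 \le q^* + w_m(\eps) < \half$ --- is deduced \emph{from} that width bound, so as written there is a circularity: a priori $q_2$ could sit near $1$, where the width bound's implicit constant blows up. The repair is short: integrate only over $[q^*, \min(q_2, \half)]$, where the bias lies in $[\half(1-\delta), \half]$ and the constant $c_\delta$ is uniform in $n$. If $q_2 \ge \half$, then $G$ stays in $[\eps, 1-\eps)$ on all of $[q^*, \half)$, so the logit of $G$ increases by at least $c_\delta (\log m)\, \delta/2 \to \infty$ while remaining bounded by $2\log((1-\eps)/\eps)$, a contradiction; hence $q_2 < \half$ for all large $n$, and your width bound then applies legitimately on $[q^*, q_2)$. (The paper sidesteps this entirely because its integration interval $[\alpha+\eps, p] \subseteq [\half, p]$ is fixed in advance.) Relatedly, your closing remark that the uniformity issue ``is exactly where the hypothesis $m(f_{n,\alpha})\to\infty$ does all the work'' misdiagnoses the difficulty: what is needed there is control of the bias range, not largeness of $m$.
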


We do not believe that the relationship between $\alpha$ and $\delta$
is the best possible. Note that for the complete graph on $n$ nodes,
one can take $\alpha$ exponentially close to $1$ for any $\delta$.  It
is natural to guess that the worst dependence on $n$ occurs in a
ring. For this case we show that one can take $\alpha$ as large as $1
- (1 - \delta)^2 / 2$.

\section{Aggregation of Information}\label{sec:aggregation}

In this section, we will prove the following theorem, using the
definitions of Section~\ref{sec:monotone-dynamics}.

\thmQuantitativeTransitive


The proof of this theorem relies on a ``sharp threshold'' theorem of
Kalai and Mossel~\cite{KM:10} (which is itself an extension of
Talagrand's theorem~\cite{Talagrand:94} to the case $q > 2$). Sharp
threshold theorems go back to Margulis~\cite{Margulis:74} and
Russo~\cite{Russo:81}; Friedgut-Kalai~\cite{FK:96} and
Kalai~\cite{Kalai:01} apply sharp threshold theorems in contexts
similar to this one. In fact, the result of~\cite{Kalai:01} gives a
weaker version of Theorem~\ref{thmQuantitativeTransitive} in which each social
type must have at least $n/o(\log n)$ members.

A crucial ingredient for sharp threshold results is the notion of
influence, which we will define for a function $f: [q]^n \to \{0,
1\}$.  Let $\P$ be a probability measure on $[q]$, and denote also by
$\P$ the corresponding product distribution over $[q]^n$.  The
influence of voter $i$ on a function $f: [q]^n \to \{0, 1\}$ is
\begin{equation}
  \label{eq:influence}
  I_\P^i(f) = \E_{\P} \Var_{\P}(f(X_1,\ldots,X_n) | X_1, \dots, X_{i-1}, X_{i+1}, \dots, X_n).
\end{equation}

Kalai and Mossel~\cite{KM:10} prove the following inequality:
\begin{theorem}\label{thm:kalai-mossel}
  Suppose that $\P(a) \ge \alpha > 0$ for every $a \in [q]$.  If
  $\max_i I_f^\P(i) \le \eps$ then
  \begin{align*}
    \sum_{i=1}^n I_\P^i(f) \ge C \log n \frac{\log (1/\eps) -
      \log(1/4)}{\log (1/\alpha)} \Var_{\P}(f)    
  \end{align*}
for a universal constant $C$.
\end{theorem}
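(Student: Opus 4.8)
The plan is to prove this as a hypercontractive (KKL/Talagrand-type) influence inequality, now carried out over the general product space $([q]^n,\P^{\otimes n})$ rather than the fair hypercube. First I would pass to the Efron--Stein / ANOVA decomposition $f=\sum_{S\subseteq[n]}f^{=S}$, where $f^{=S}$ is the orthogonal component of $f$ that genuinely depends on exactly the coordinates in $S$. In this basis the three relevant quantities become transparent: $\Var_\P(f)=\sum_{S\neq\emptyset}\|f^{=S}\|_2^2$, the influence is $I_\P^i(f)=\sum_{S\ni i}\|f^{=S}\|_2^2$ (because $\E[f\mid X_{-i}]$ is exactly the sum of the pieces avoiding $i$), and hence the total influence is the level-weighted mass $\sum_{i}I_\P^i(f)=\sum_{S}|S|\,\|f^{=S}\|_2^2$. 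Thus the theorem is an inequality between two weightings of the same spectrum, and the goal is to show that the small-influence hypothesis forces the level-weighted mass to dominate the plain mass by a factor growing like $\log(1/\eps)/\log(1/\alpha)$.

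Next I would introduce the coordinate projection $D_i f=f-\E[f\mid X_{-i}]=\sum_{S\ni i}f^{=S}$, so that $\|D_i f\|_2^2=I_\P^i(f)$, together with the noise semigroup $T_\rho$, which acts as $T_\rho f^{=S}=\rho^{|S|}f^{=S}$. The engine is hypercontractivity on $([q],\P)$: in its $(1{+}\eta)\to 2$ form it gives $\|T_\rho g\|_2\le\|g\|_{1+\eta}$ once $\rho$ is small enough relative to $\eta$, where the admissible $\rho$ degrades by a factor $\asymp 1/\log(1/\alpha)$ compared with the fair binary case, since the log-Sobolev constant of an $\alpha$-light atom scales like $1/\log(1/\alpha)$. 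Applying this to $g=D_i f$, and using that $f$ is $\{0,1\}$-valued to bound $\|D_i f\|_{1+\eta}^2$ by $I_\P^i(f)^{2/(1+\eta)}$—a power of $I_\P^i(f)=\|D_i f\|_2^2$ strictly larger than one—and then invoking $I_\P^i(f)\le\eps$, turns the hypercontractive bound into an $\eps$-power saving. Reading off $\|T_\rho D_i f\|_2^2=\sum_{S\ni i}\rho^{2|S|}\|f^{=S}\|_2^2$ and summing over $i$ yields the single master inequality
\begin{align*}
  \sum_{S\neq\emptyset}|S|\,\rho^{2|S|}\,\|f^{=S}\|_2^2 \;\le\; \eps^{\,\theta(\rho,\alpha)}\,\sum_{i}I_\P^i(f),
\end{align*}
with a positive gain exponent $\theta(\rho,\alpha)\asymp(1-\rho^2)/\log(1/\alpha)$.

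It then remains to optimize. Taking $\rho$ close to $1$ keeps the weight $|S|\rho^{2|S|}$ bounded below by a constant across a band of low degrees, so the left-hand side dominates a constant multiple of $\Var_\P(f)$ provided the bulk of the variance is not pushed to very high levels; the high-degree tail is controlled separately, using that large degree already forces large total influence through the same hypercontractive estimate (equivalently, one first proves the functional form $\sum_i I_\P^i(f)/\log(1/I_\P^i(f))\ge c\,\Var_\P(f)/\log(1/\alpha)$, after which the theorem is a one-line consequence of $I_\P^i(f)\le\eps$). Tuning how fast $\rho\to1$ against $\eps$ converts $\theta(\rho,\alpha)$ into a gain linear in $\log(1/\eps)$, and dividing through yields the stated lower bound, proportional to $\frac{\log(1/\eps)-\log(1/4)}{\log(1/\alpha)}\,\Var_\P(f)$; the shift $-\log(1/4)$ is just the threshold below which the $\eps$-saving ceases to help, making the bound informative precisely for $\eps<1/4$.

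The crux, and the whole point of the Kalai--Mossel extension beyond the $q=2$ case, is the hypercontractive step. Over the fair binary cube one has the clean fact that $D_i f\in\{-1,0,1\}$, so every $L^p$ norm of the derivative equals $I_\P^i(f)^{1/p}$ and the $\eps$-saving is immediate; over a general atom this identity is false, so I must control $\|D_i f\|_{1+\eta}$ for the non-Boolean derivative by other means, and—more delicately—pin the dependence of the admissible noise rate on $\alpha$ to the correct $1/\log(1/\alpha)$ order rather than a crude bound. It is exactly this $\alpha$-dependence that produces the $\log(1/\alpha)$ in the denominator, so extracting it sharply and uniformly in $q$ is where essentially all of the difficulty lies; I would isolate it as a self-contained two-point hypercontractivity lemma and then tensorize.
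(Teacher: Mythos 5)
You should know at the outset that the paper contains no proof of this statement to compare against: Theorem~\ref{thm:kalai-mossel} is imported verbatim from Kalai and Mossel~\cite{KM:10} (itself an extension of Talagrand~\cite{Talagrand:94} to $q>2$), and the paper's only ``proof'' is the citation. Your sketch reconstructs the standard route of that literature --- Efron--Stein/ANOVA decomposition with $I_\P^i(f)=\|D_if\|_2^2=\sum_{S\ni i}\|f^{=S}\|_2^2$, $(1+\eta)\to 2$ hypercontractivity on $([q],\P)$ with noise rate tied to the log-Sobolev constant $\asymp 1/\log(1/\alpha)$, an $\eps$-power saving from the small-influence hypothesis, and a low-degree/high-degree dichotomy to recover $\Var_\P(f)$ --- and in outline it is sound and is essentially how the cited source proceeds.

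Two substantive remarks. First, the step you single out as the crux is easier than you fear: conditionally on $X_{-i}$ (with $X_i$ independent of $X_{-i}$), $f$ is a Boolean function of $X_i$ alone, so $D_if$ takes exactly the two values $1-m$ and $-m$, where $m=\E[f\mid X_{-i}]$; hence $\E\bigl[|D_if|^{1+\eta}\mid X_{-i}\bigr]=m(1-m)\bigl((1-m)^\eta+m^\eta\bigr)\le 2\,\Var(f\mid X_{-i})$, and taking expectations gives $\|D_if\|_{1+\eta}^{1+\eta}\le 2\,I_\P^i(f)$ for \emph{any} alphabet and any product measure. Raising to the power $2/(1+\eta)$ and using $I_\P^i(f)\le\eps$ yields the saving $(2\eps)^{(1-\eta)/(1+\eta)}$, and constants of exactly this kind are the plausible source of the $-\log(1/4)$ shift (the bound being informative only for $\eps<1/4$). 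So no separate device is needed there; the genuinely delicate point is the one you correctly identify, namely the sharp $\alpha$-dependence of the admissible noise rate, which your sketch black-boxes --- but since it is only needed on the single-coordinate space $([q],\P)$ plus tensorization, citing the known atom-weighted two-point hypercontractivity is legitimate. Second, a point in your favor: your argument produces the inequality \emph{without} the factor $\log n$ appearing in the statement as printed, and rightly so --- with that factor the statement is false (for the tribes function, $\eps\asymp(\log n)/n$, $\sum_i I_\P^i(f)\asymp\log n$ and $\Var_\P(f)\asymp 1$, while the right-hand side would be $\asymp(\log n)^2$). The $\log n$ is a misprint: the paper itself drops it when invoking the theorem in the proof of Theorem~\ref{thmQuantitativeTransitive}, consistent with~\cite{KM:10}, and your sketch proves the correct, actually-used version.
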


Before proving Theorem~\ref{thmQuantitativeTransitive} we will require
a simple definition and Lemma.
Let $\P$ be a probability distribution on $[q]$ such that $\P(0) >
0$.  Define the following family of distributions $\P_t$ (indexed by
$t \in [0,1]$) as follows:
\begin{align*}
  \P_t(a) =
  \begin{cases}
    t & a=0\\
    (1-t)\P(a|a \neq 0) & a > 0.
  \end{cases}
\end{align*}  
Note that $\P_{\P(0)} = \P$. 
\begin{lemma}
  Let $\P$ be a probability distribution on $[q]$ such that $\P(0) =
  \P(1) + \delta$ for some $\delta > 0$. Let $s$ be such that $\P_s(0)
  = \P_s(1)$. Then
  \begin{equation}\label{eq:gap-ineq}
    \P(0) - s \ge \delta / 2.
  \end{equation}
\end{lemma}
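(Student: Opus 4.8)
The plan is to reduce the statement to a short algebraic computation, the only non-trivial input being that the probabilities of $0$ and $1$ together consume at most all of the mass. First I would set $p_0 = \P(0)$ and $p_1 = \P(1)$, so that the hypothesis reads $p_0 = p_1 + \delta$, and I would record the elementary constraint $p_0 + p_1 \le 1$, which holds because $\P$ is a probability distribution on $[q]$ and the remaining alternatives carry nonnegative mass.

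Next I would make the defining equation for $s$ explicit. Since $\P_s(0) = s$ and $\P_s(1) = (1-s)\,\P(1 \mid {\neq}\, 0) = (1-s)\,p_1/(1-p_0)$, the condition $\P_s(0) = \P_s(1)$ becomes $s(1-p_0) = (1-s)p_1$, which I would solve to obtain $s = p_1/(1 - p_0 + p_1)$. The key simplification is that $1 - p_0 + p_1 = 1 - (p_0 - p_1) = 1 - \delta$, so in fact $s = p_1/(1-\delta)$.

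I would then substitute this into the target quantity and simplify:
\begin{align*}
  \P(0) - s = p_1 + \delta - \frac{p_1}{1-\delta} = \delta \cdot \frac{1 - \delta - p_1}{1-\delta}.
\end{align*}
Finally, the constraint $p_0 + p_1 \le 1$ together with $p_0 = p_1 + \delta$ gives $2p_1 \le 1 - \delta$, i.e.\ $p_1 \le \half(1-\delta)$, whence $1 - \delta - p_1 \ge \half(1-\delta)$ and therefore $\P(0) - s \ge \delta/2$, as claimed.

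There is essentially no obstacle here beyond bookkeeping; the one point worth checking is that the formula for $s$ is well-defined, i.e.\ that $1 - \delta > 0$, which follows from $\delta = p_0 - p_1 \le p_0 \le 1$ (the boundary case $\delta = 1$ forces $p_0 = 1$, $p_1 = 0$, $s = 0$, and the inequality holds trivially). The only genuinely substantive step is recognizing that the probability constraint $p_0 + p_1 \le 1$ is exactly what produces the factor $\half$ in the bound.
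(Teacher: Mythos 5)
Your proof is correct and follows essentially the same route as the paper's: solve the defining equation to get $s = \P(1)/(1-\delta)$, compute $\P(0) - s = \delta(1-\P(0))/(1-\delta)$ (your expression $\delta(1-\delta-p_1)/(1-\delta)$ is the same quantity since $1-\P(0) = 1 - \delta - p_1$), and invoke the constraint $\P(0) + \P(1) \le 1$ to extract the factor $\tfrac12$. The only additions beyond the paper's argument are your explicit check that $1-\delta > 0$ and the degenerate case $\delta = 1$, which are harmless bookkeeping.
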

\begin{proof}
  We can solve for $s$ to find that $s = (\P(0) -
  \delta)/(1-\delta)$. Hence
  \begin{align*}
    \P(0) - s = (1 - \P(0)) \frac{\delta}{1-\delta} \geq \delta / 2,
  \end{align*}
  Where the inequality follows from the fact that since $\P(0)
  = \P(1) + \delta \le 1 - \P(0) + \delta$, it holds that $1-\delta
  \le 2-2\P(0)$. 
\end{proof}
We prove Theorem~\ref{thmQuantitativeTransitive} below by calculating
the derivative of $\P_t(f=0)$ with respect to $t$ and then integrating
between $t = s$ and $t = \P(0)$. We thus interpolate between $\P_s$,
in which the probability of $0$ and $a$ are equal, and $\P$
($=\P_{\P(0)}$), in which the probability of $0$ is larger by $\delta$
than the probability of $a$.

For a function $g$ and a probability measure $\P$, we will write
$\P(g)$ for the expectation of $g$ under $\P$.

\begin{proof}[Proof of Theorem~\ref{thmQuantitativeTransitive}]
  Since the conclusion of the theorem is only weakened when $\delta$
  is reduced, we can assume without loss of generality that the
  inequality $\P(0) \geq \P(i) + \delta$ is tight and that $\P(1) +
  \delta = \P(0)$.  Choose $s \in [0, \P(0)]$ so that $\P_s(0) =
  \P_s(1)$.

  Define $g = 1_{(f = 0)}$.  Suppose (for now) that $\P(b) \ge
  \frac{\delta}{2q}$ for all $b \in [q]$, and so $\P_t(b) \ge
  \frac{\delta}{2q}$ for all $s \le t \le \P(0)$ and all $b \in [q]$.
  Since $f$ is fair and monotone, $\P_s(g) \ge 1/q$. Using
  monotonicity again, $\P_t(g) \ge 1/q$ for all $t \ge s$.

  By Theorem~\ref{thm:kalai-mossel}, if $\eps_t := \max_{i \in [n]}
  I_{\P_t}^i(g) < 1/10$ for all $i$ then
  \begin{align*}
    \sum_{i=1}^n I_{\P_t}^i (g) \ge
    \frac{C \log(1/\eps_t)}{\log (2q/\delta)} \Var_{\P_t}(g) \ge
    \frac{C \log (1/\eps_t)}{q \log (2q/\delta)} \P_t(1 - g)
  \end{align*}
  for all $t \in [s, \P(0)]$. Now, recall that for $A \in \CalA$, if
  $i,j \in A$ then they play the same r\^ole in $f$ and in particular
  have the same influence. Hence $\sum_{i=1} I_{\P_t}^i (g) \ge m
  \eps_t$, since $|A| \ge m$ for any $A \in \CalA$. In particular, if
  $\eps_t \ge (\log m)/m$ then $\sum_i I_{\P_t}^i (g) \ge \log m$;
  on the other hand, if $\eps_t \le (\log m)/m$ then the display above
  implies that 
  \begin{align}
    \label{eq:I-ineq}
    \sum_i I_{\P_t}(g) \ge C_q \frac{\log m} { \log (1/\delta)} \P_t(1-g),
  \end{align}
  for some $C_q$ that depends only on $q$.  This last inequality
  (Eq.~\ref{eq:I-ineq}) holds, therefore, in either case.

  On the other hand, Lemma 2.3 of~\cite{KM:10} (a generalization of
  Russo's formula) gives
  \[
  \frac{\partial \P_t(g)}{\partial t} \ge \sum_{i=1} I_{\P_t}^i(g)
  \]
  and so
  \[
  \frac{\partial \P_t(g)}{\partial t} \ge C_q \frac{\log m}{\log (1/\delta)} \P_t(1 - g)
  \]
  for all $t \in [s, \P(0)]$. Integrating between $s$ and $t$, we have
  \[
  \P_t(g) \ge 1 - \frac{1}{q} \exp\left(-C_q \frac{\log m}{\log (1/\delta)}(t - s)\right)
  \]
  and so we conclude by setting $t = \P(0)$ and invoking
  Eq.~\eqref{eq:gap-ineq}.

  Now, if the hypothesis $\P(b) \ge \frac{\delta}{2q}$ fails then we
  construct $\tilde \P$ by $\tilde \P(0) = \P(0) - \delta/2$ and
  $\tilde \P(b) = \P(b) + \frac{\delta}{2(q-1)}$ for $b \ne
  0$. Setting $\tilde \delta = \delta / 2$, we see that $\tilde \P$
  satisfies the hypothesis of the theorem (with $\delta$ replaced by
  $\tilde \delta$) and it also satisfies $\tilde \P(b) \ge
  \frac{\tilde \delta}{2q}$.  The proof goes through, then, and we can
  absorb the extra factor of 2 into the constant $C_q$.
\end{proof}

\subsection{Where aggregation fails}
\label{sec:agg-fails}
Let $q = 2$ and suppose that both the interaction mode and the
election system are given by simple majority votes. In this scenario,
we prove Theorem~\ref{thm:example} by giving an example with two
social types, one of which has a constant size as $n \to
\infty$. Information will not aggregate asymptotically in this
example, and the reason for the failure will be the presence of the
constant-sized social type.

Since $q=2$, it will be more convenient to set $p = \frac{1}{2} +
\frac{\delta}{2} = \P(0)$ and to write our example in terms of $p$
instead of in terms of $\delta$.  Let $G_n=(A\cup B, E)$, where
$|A|=1/(1-p)$ and $|B|=n(1/(1-p)+1)$. Then in particular the number of
vertices in $G_n$ is at least $n$. We assume here that $1/(1-p)$ is an
integer.

Let each $a\in A$ be connected to each $b \in B$, and let none of the
vertices in $A$ be connected to each other. The vertices in $B$ are
arranged in $n$ cliques, each of size $1/(1-p)+1$, and there are no
edges between the cliques. Each vertex in $B$ has a self-loop.

The degree of the vertices in $B$ is odd, since each has edges to
$2/(1-p)+1$ edges. To make the degrees in $A$ odd add a vertex that is
connected to all vertices in $A$. An isolated vertex can be added to
make the total number of vertices odd.

Henceforth we condition on the event that $X_v(0)=1$ for all $v \in
A$. Note that this happens with probability
$(1-p)^{|A|}=(1-p)^{1/(1-p)}$.

Let $C$ be one of the cliques of $B$. If at least one vertex $w$ in
$C$ votes 1 initially (at time $t=0$) then {\bf all} the vertices in
$C$ will vote 1 in the next round ($t=1$); each will have at least
$1/(1-p)+1$ neighbors ($\{w\}\cup A$) that vote 1 and at most
$1/(1-p)$ neighbors ($B \setminus \{w\}$) that vote 0. The probability
that at least one vertex in $C$ votes 1 initially is $1-p^{1/(1-p)}$,
which is greater than $1-1/e$, or about 0.63. Hence the number of
cliques in which all vertices will vote 1 at time 1 will be
distributed $\Binom\left(n, 1-p^{1/(1-p)}\right)$, which dominates the
distribution $\Binom\left(n, 0.6\right)$.

By Hoeffding's inequality, the probability that a majority of the
cliques (and hence a majority of the vertices) will vote 1 at time 1
is at least $1-\exp(-0.02n)$. Once this happens, the vertices in $A$
will all vote 1 in all future iterations, and so will these
cliques. Hence for all $T \geq 2$ a majority vote will result in 1.

The event that a majority of the cliques have a voter that initially
votes 1 is independent of the event that all vertices in $A$ initially
vote 1. Hence both events happen with probability at least
$(1-p)^{1/(1-p)}(1-\exp(-0.02))$. Since this quantity is positive and
independent of $n$, it follows that information does not aggregate and
Theorem~\ref{thm:example} is proved.

Berger~\cite{berger2001dynamic} constructs an example of a family of
graphs with $n$ vertices. In each graph there exists a set of at most
18 vertices (which he calls a {\em dynamic monopoly}), such that if
all agents in this set initially vote identically then, in majority
dynamics with two alternatives, all the agents converge to the initial
vote of the dynamic monopoly. In particular, this implies that in this
example, with probability at least $(1-p)^{18}$, aggregation fails for
any $n$. This is another example of how aggregation can fail when a
particular social type has a small size (in this case at most 18).

\ShortVersionOnly{\bibliographystyle{abbrv} \bibliography{majority}\appendix}

\section{The existence of monotone, fair and transitive aggregation
  functions}

\begin{proposition}
  For all $q \geq 2$ and $n$ prime and strictly larger than $q$, there
  exists a monotone, fair and transitive aggregation function $f :
  [q]^n \to [q]$.
\end{proposition}
\begin{proof}
  Let $q \geq 2$ and let $n>q$ be prime. Let $f : [q]^n \to [q]$ be
  defined as follows.

  For $a=(a_0,\ldots,a_{n-1}) \in [q]^n$ let $Q(a)$ be the set of
  alternatives that received the most votes. If $Q(a)=\{b\}$ is a
  singleton then let $f(a) = b$. Otherwise $|Q(a)| \geq 2$.  Let $M(a)
  \subset [n]$ be the set of voters that voted for one of
  the alternatives in $Q(a)$. Note that $|M(a)| \neq n$, since otherwise
  each alternative received the same number of votes and so $|Q(a)|$
  divides $n$, which is impossible since $n$ is prime. Also, $M(a)$ is
  clearly not the empty set, and so $|M(a)|$ is an invertible element
  of the field $\Z_n$. Let
  \begin{align*}
    k(a) = \frac{1}{|M(a)|}\sum_{i \in M(a)}i = \frac{1}{|M(a)|}\sum_{a_i \in Q(a)}i
  \end{align*}
  where addition and division are taken over the field $\Z_n$. Note
  that $k(a)$ is the ``average'' position of a voter that voted for
  one of the votes that received the most votes. Let
  \begin{align*}
    \ell(a) = \min\{0 \leq i < n: k(a) + i \in M(a)\},
  \end{align*}
  where again the sum $k(a) + i$ is taken over $\Z_n$. Finally, define
  \begin{align*}
    f(a) = a_{k(a) + \ell(a)}.
  \end{align*}

  By definition $f(a) \in Q(a)$, and so $f$ is the plurality function
  with some tie breaking rule, and is therefore monotone. Also, none
  of the alternative names appear in its definition, and it is
  therefore fair. It remains to show that it is transitive. We do this
  by showing that for each $0 \leq i_1 \leq i_2 < n$ there exists a
  permutation $\tau=\tau_{i_1,i_2}$ on $[n]$ such that $\tau(i_1) =
  i_2$ and $f(\tau(a)) = f(a)$, where $\tau(a) =
  (a_{\tau(0)},\ldots,a_{\tau(n-1)})$.

  Let $\tau_{i_1,i_2}(i) = \tau(i) = i - i_1 + i_2 \mod n$. Note that
  $Q(\tau(a)) = Q(a)$ and that $M(\tau(a)) = \tau^{-1}(M(a))$, so that
  $|M(\tau(a))| = |M(a)|$. Hence
  \begin{align*}
    k(\tau(a)) &= \frac{1}{|M(\tau(a))|}\sum_{i  \in M(\tau(a))}i\\
    &= \frac{1}{|M(a)|}\sum_{i \in \tau^{-1}(M(a))}i.
  \end{align*}
  By a change of variables we get that
    \begin{align*}
    k(\tau(a)) &= \frac{1}{|M(a)|}\sum_{i \in M(a)}\tau^{-1}(i)\\
    &= k(a) + i_1 - i_2\\
    &= \tau^{-1}(k(a))
  \end{align*}
  Next, 
  \begin{align*}
    \ell(\tau(a)) &= \min\{0 \leq i < n: k(\tau(a)) + i \in
    M(\tau(a))\}\\
    &= \min\{0 \leq i < n: k(a) + i_1 - i_2  + i\in M(a) + i_1 -i_2\}\\
    &= \ell(a),
  \end{align*}
  and finally, since $\tau(i+j) = \tau(i)+j$:
  \begin{align*}
    f(\tau(a)) = a_{\tau\big(k(\tau(a)) + \ell(\tau(a))\big)}
    = a_{\tau\big(k(\tau(a))\big) + \ell(a)}
    = a_{k(a) + \ell(a)} = f(a).
  \end{align*}

\end{proof}

\section{On higher thresholds of agreement}\label{sec:threshold}

In this section we again specialize to the case of $q=2$ alternatives,
and consider the question of when it can be shown that, after a number
of rounds of fair and monotone dynamics, a large proportion of the
population will agree on the correct alternative.

Consider the election system $g_\alpha(x) = 1(\sum_i x_i \ge
(1-\alpha) n)$.  When $\alpha = 1/2$, this is simply the majority
function, and so our earlier results apply, and under the appropriate
conditions $Y = g(X_1(T),\ldots,X_{|V|}(T))$ will equal $0$ with high
probability. What about when $\alpha > 1/2$? In this case $Y$ will
equal $0$ only if an $\alpha$ fraction of the population votes $0$ at
time $T$. When does this happen with high probability?

Since $g_\alpha$ satisfies the same transitivity properties as
$g_{1/2}$, the proof of Theorem~\ref{thmQuantitativeTransitive} mostly still
applies. At least, the ``sharp threshold'' part of the claim is still
true: there is some $p^* \in (0, 1)$ such that $\P(0) > p^*$ implies
that $\P(Y = 0) \to_{m(f_n)} 1$. Since $g_{\alpha}$ is no longer
anti-symmetric, however, we no longer know that the threshold occurs
at $p^* = 1/2$.

In this section, we will show that $p^* \le \alpha$, but we will also
give a simple example for which $p^* = 1 - O((1-\alpha)^2)$ as $\alpha \to 1$.
Thus, there may be a large gap between our bound and the true behavior
of $p^*$.

The first step is to obtain a lower bound on $\E \sum_i X_v(t)$.  The
argument here appeared in a course taught by the first author in Fall
2010, although it may have been known before then. In any case, we
give a proof for completeness.  For the rest of this section, $\P_p$
denotes the probability distribution on $\{0, 1\}$ satisfying $\P_p(0)
= p$, in which case $\delta = 2p-1$. As above, we also denote by
$\P_p$ the distribution over $n$ i.i.d.\ random variables distributed
$\P_p$.

\begin{lemma}
Let $f: \{0, 1\}^n \to \{0, 1\}$ be a monotone
function with $\P_{1/2}(f = 0) \ge \frac{1}{2}$.
Then $\P_p(f=0) \ge p$ for all $p \in [\frac{1}{2}, 1]$.
\end{lemma}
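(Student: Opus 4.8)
The plan is to reduce the statement to a differential inequality for the single-variable function $u(p) := \P_p(f=0)$ and then integrate. First I would pass to a monotone \emph{increasing} representation: setting $Y_i := 1 - X_i$ (so $Y_i$ is Bernoulli with $\Pr(Y_i=1)=p$) and $\psi(Y) := 1_{f(\mathbf 1 - Y) = 0}$, monotonicity of $f$ makes $\psi$ an increasing Boolean function, and $u(p) = \mu_p(\psi=1)$, where $\mu_p$ denotes the product Bernoulli$(p)$ measure. The hypothesis becomes $u(\half)\ge\half$ and the goal becomes $u(p)\ge p$ on $[\half,1]$. It is worth noting that the bound is tight for the dictator $\psi(Y)=Y_1$, where $u(p)=p$ exactly; this identifies the extremal case and tells us which differential inequality to aim for.

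The key step is to show that for every $p\in(0,1)$,
\[
  u'(p) \ge \frac{u(p)\,(1-u(p))}{p(1-p)}.
\]
I would obtain this by combining two standard facts. Russo's formula (the binary case of the generalized Russo formula already cited in the proof of Theorem~\ref{thmQuantitativeTransitive}) gives $u'(p) = \sum_i \Pr_{\mu_p}(i \text{ pivotal for }\psi)$. The Efron--Stein (Poincar\'e) inequality on the product space gives $\Var_{\mu_p}(\psi)\le \sum_i \E_{\mu_p}[\Var_i(\psi)] = p(1-p)\sum_i \E_{\mu_p}[(\partial_i\psi)^2]$, where $\partial_i\psi := \psi(Y^{i\to 1}) - \psi(Y^{i\to 0})$. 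Since $\psi$ is Boolean and monotone, $(\partial_i\psi)^2 = \partial_i\psi$ is precisely the indicator that $i$ is pivotal, so the right-hand side equals $p(1-p)\,u'(p)$; as $\Var_{\mu_p}(\psi) = u(1-u)$, the displayed inequality follows.

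Finally I would integrate via the log-odds $r(p) := u(p)/(1-u(p))$, which is legitimate because $u(p)\ge u(\half)\ge\half$ on $[\half,1]$, so $u\in(0,1)$ there unless $u\equiv 1$ (in which case the claim is immediate). A short computation rewrites the inequality as $\frac{d}{dp}\log r(p) \ge \frac{d}{dp}\log\frac{p}{1-p}$, so that $\frac{1-p}{p}\,r(p)$ is nondecreasing on $[\half,1]$. Evaluating at $p=\half$ gives $r(\half)\ge 1$ from $u(\half)\ge\half$, hence $r(p)\ge \frac{p}{1-p}$ throughout, which is exactly $u(p)\ge p$.

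The only real content is the differential inequality, and I expect the main obstacle to be the bookkeeping that makes the pivotality terms from Russo's formula match the squared-derivative terms from Efron--Stein exactly — this matching is what forces the dictator to saturate the resulting logistic ODE $u' = u(1-u)/(p(1-p))$ whose solution through $(\half,\half)$ is the diagonal $u=p$. Once that is in place the integration is routine, modulo the degenerate cases $u\equiv 0$ (excluded by the hypothesis) and $u(p)=1$ (where the claim holds outright).
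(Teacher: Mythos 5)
Your proposal is correct and takes essentially the same approach as the paper: both proofs derive the key differential inequality $u'(p) \ge \Var_{\P_p}(f)/(p(1-p))$ by combining Russo's formula with the Efron--Stein inequality. The only difference is the final step --- you integrate the inequality directly via the log-odds comparison, whereas the paper argues by contradiction at the first crossing point $r$ where $\P_r(f=0)=r$; that route requires \emph{strict} inequality in Efron--Stein and hence a separate treatment of functions depending on one coordinate, a case your integration handles uniformly (the dictator simply saturates the inequality, giving $u(p)=p$ exactly).
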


Note that equality holds for the function $f(x) = x_i$. In other
words, every monotone function aggregates information at least as well
as a dictator function. It is easy to construct less pathological
examples that come arbitrarily close to achieving this bound.

\begin{proof}
By the chain rule,
\begin{align*}
\frac{\partial \P_p(f)}{\partial p}
&= \sum_{i=1}^n \P_p(f(X_1, \dots, X_{i-1}, 0, X_{i+1}, \dots, X_n)
- f(X_1, \dots, X_{i-1}, 1, X_{i+1}, \dots, X_n)) \\
&= -\frac{1}{p(1-p)}\sum_{i=1}^n I^i_{P_p}(f).
\end{align*}
By the Efron-Stein inequality, $\sum I^i (f) \ge \Var(f)$,
with equality only if $f$ depends just on
one coordinate. If $f$ depends just on one coordinate, then the
proof is trivial, so we can suppose the contrary. Thus
$\frac{\partial}{\partial p} \P_p(f) < -\frac{1}{p(1-p)}\Var_{\P_p}(f)$.

Suppose, for a contradiction, that $1 - \P_p(f) = \P_p(f=0) < p$ for some
$p > \frac{1}{2}$.
Let $r$ be the infinum over all $p$ satisfying the previous sentence.
Since $\P_p(f)$ is a smooth function of $p$, it follows that
$\P_r(f) = 1-r$ and so $\Var_{\P_r}(f) = r(1-r)$.
Thus, $\frac{\partial}{\partial p} \P_p(f)|_{p=r} < -1$,
contradicting the assumption that $\P_p(f) > 1-p$ for
arbitrarily close $p > r$.
\end{proof}

Note that for any vertex $v$ and any $t$,
the conditions of the lemma hold for $f = X_v(t)$. Summing over
all $v$, we obtain the following:

\begin{corollary}\label{cor:in-expectation}
Suppose that $X_v(0)$ are independent Bernoulli variables with
mean $p \ge \frac{1}{2}$. Then, for any $t$,
\[
\E \sum_{v \in V} X_v(t) \le (1-p)|V|.
\]
\end{corollary}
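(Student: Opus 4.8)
The plan is to apply the preceding lemma separately to each coordinate of the dynamics. Fix a vertex $v$ and a time $t$, and regard $f = X_v(t)$ as a Boolean function of the initial profile $(X_w(0))_{w \in V} \in \{0,1\}^{|V|}$, with $X_w(0) \sim \P_p$ (so that $\P_p(X_w(0)=0)=p \ge \half$). If I can check that this $f$ is monotone and satisfies $\P_{1/2}(X_v(t)=0) \ge \half$, then the lemma gives $\P_p(X_v(t)=0) \ge p$ for every $p \in [\half,1]$, whence
\[
\E X_v(t) = \P_p(X_v(t)=1) = 1 - \P_p(X_v(t)=0) \le 1-p.
\]
Summing this inequality over all $v \in V$ yields $\E \sum_{v} X_v(t) \le (1-p)|V|$, which is exactly the claim. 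So the entire task reduces to verifying the two hypotheses of the lemma for the single coordinate $X_v(t)$.

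For monotonicity I would argue by induction on $t$. At $t=0$ the function $X_v(0)$ is a single coordinate, hence trivially monotone increasing in the initial profile. For the inductive step, the update $X_v(t) = \Maj\big((X_w(t-1))_{w \in N_v}\big)$ is a majority over an odd-sized neighborhood (recall that ties are avoided by adding or removing $v$ from $N_v$), and the majority function is monotone; since the inductive hypothesis makes each $X_w(t-1)$ a monotone function of the initial votes, and a composition of monotone functions is monotone, $X_v(t)$ is monotone as well. This is precisely the monotonicity of the aggregation function noted in Section~\ref{sec:monotone-dynamics}, applied to the intermediate coordinate $X_v(t)$ rather than to the final election output.

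The symmetry hypothesis $\P_{1/2}(X_v(t)=0) \ge \half$ is the part I would state most carefully, though it too is routine. The key observation is that the dynamics commute with the global relabeling $\sigma$ that swaps $0$ and $1$ (for $q=2$, the map $a \mapsto 1-a$): because each majority update is fair, flipping every initial vote flips every later vote, i.e.\ $X_v(t)(\sigma(x)) = 1 - X_v(t)(x)$. Under $\P_{1/2}$ the initial profile is invariant in distribution under $\sigma$, so the distribution of $X_v(t)$ is symmetric in $0 \leftrightarrow 1$, giving $\P_{1/2}(X_v(t)=0) = \P_{1/2}(X_v(t)=1) = \half$. With both hypotheses verified, the lemma applies and the corollary follows. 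I expect the only genuinely delicate point to be the interaction of these two arguments with the tie-breaking convention, but the odd-degree assumption makes every update a genuine majority, so monotonicity and the bit-flip symmetry both hold exactly and the obstacle is harmless.
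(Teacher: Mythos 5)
Your proposal is correct and is exactly the paper's argument: the paper likewise observes that $f = X_v(t)$ satisfies the hypotheses of the preceding lemma (monotonicity and $\P_{1/2}(f=0)\ge \frac12$) and sums the resulting bound $\P_p(X_v(t)=0)\ge p$ over all $v\in V$. You simply spell out the verification of the two hypotheses (induction for monotonicity, bit-flip symmetry for fairness at $p=\frac12$) that the paper leaves implicit.
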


Combining this with the proof of
Theorem~\ref{thmQuantitativeTransitive}, we arrive 
at the promised bound on the location of the sharp threshold.
Of course, this is just a restatement of Theorem~\ref{thm:threshold}.

\begin{corollary}\label{cor:threshold}
  Let $f_n:[q]^n \to q$ be a sequence of aggregation functions with
  monotone and fair modes of interaction and election system
  $g_{\alpha}$ as defined above.  Suppose that $\lim_{n \to \infty}
  m(f_n) = \infty$, and that $p > \alpha$. Then $\P_p(Y = 0)
  \to 1$ as $n \to \infty$.
\end{corollary}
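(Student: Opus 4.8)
The plan is to reduce the statement to a single run of the threshold integration already carried out in the proof of Theorem~\ref{thmQuantitativeTransitive}, with the one missing ingredient — a starting point for the integration — supplied by Corollary~\ref{cor:in-expectation}. Throughout I work with $q=2$, write $p=\P(0)$, and let $S=\sum_{v\in V}X_v(T)$ be the number of $1$-votes at time $T$, so that by definition of $g_\alpha$ we have $Y=0$ exactly when $S<(1-\alpha)|V|$. The aggregation function $f_n$ is monotone, so $g:=\ind_{\{Y=0\}}$ is a monotone function of the initial votes and $h(p):=\P_p(Y=0)$ is nondecreasing in $p$.

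First I would fix an intermediate bias $p'$ with $\alpha<p'<p$ and use the first moment to show that $h(p')$ is bounded below by a constant. By Corollary~\ref{cor:in-expectation}, $\E_{p'}S\le(1-p')|V|$, so Markov's inequality applied to the nonnegative variable $S$ gives
\[
\P_{p'}(Y=1)=\P_{p'}\big(S\ge(1-\alpha)|V|\big)\le\frac{(1-p')|V|}{(1-\alpha)|V|}=\frac{1-p'}{1-\alpha}.
\]
Since $p'>\alpha$ this is strictly less than $1$, whence $h(p')=\P_{p'}(Y=0)\ge\frac{p'-\alpha}{1-\alpha}=:c_0>0$, a constant independent of $n$.

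Next I would re-run the integration in the proof of Theorem~\ref{thmQuantitativeTransitive} on the interval $[p',p]$ in place of $[s,\P(0)]$. Because $h$ is nondecreasing and $h(p')\ge c_0$, we have $h(t)\ge c_0$ for all $t\in[p',p]$, and hence $\Var_{\P_t}(g)=h(t)(1-h(t))\ge c_0(1-h(t))$. Over this interval the distribution $\P_t$ has $\P_t(0)=t$ and $\P_t(1)=1-t$ both bounded below by the constant $\rho_0:=\min(p',1-p)>0$ (the case $p=1$ is trivial, as then all initial votes are $0$), so the Kalai--Mossel denominator $\log(1/\rho_0)$ is a constant and the gap $2t-1\ge2\alpha-1$ stays bounded away from $0$. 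Running the same dichotomy on the maximal influence $\eps_t$ as in that proof — if $\eps_t\ge(\log m)/m$ then $\sum_i I_{\P_t}^i(g)\ge m\eps_t\ge\log m$, and otherwise Theorem~\ref{thm:kalai-mossel} applies — together with Russo's formula yields a differential inequality
\[
h'(t)\ge C\,(\log m)\,(1-h(t)),\qquad t\in[p',p],
\]
for a constant $C>0$ depending only on $p,p',\alpha$, where $m=m(f_n)$.

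Integrating this from $p'$ to $p$ gives $1-h(p)\le(1-h(p'))\exp(-C(p-p')\log m)\le m^{-C(p-p')}$, and since $p-p'>0$ is fixed and $m=m(f_n)\to\infty$ we conclude $\P_p(Y=0)=h(p)\to1$. The only genuinely new point, and the main obstacle, is the location of the integration's starting point: in the symmetric setting of Theorem~\ref{thmQuantitativeTransitive} fairness pins $\P_{1/2}(Y=0)=1/2$ and one integrates upward from there, but $g_\alpha$ is not anti-symmetric, so a priori the threshold could sit anywhere and that anchor is unavailable. Corollary~\ref{cor:in-expectation} is exactly what replaces it, forcing $\P_{p'}(Y=0)$ above a positive constant as soon as $p'>\alpha$; everything after that is a verbatim repetition of the threshold argument, with the constants ($\rho_0$ and $2\alpha-1$) checked to remain bounded away from their degenerate values on $[p',p]$.
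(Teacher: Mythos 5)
Your proof is correct and takes essentially the same route as the paper's: Corollary~\ref{cor:in-expectation} together with Markov's inequality supplies the integration anchor that fairness can no longer provide for $g_\alpha$, and the sharp-threshold differential inequality from the proof of Theorem~\ref{thmQuantitativeTransitive} is then integrated over a fixed-length interval below $p$. The only cosmetic difference is that you anchor once at $p'$ and propagate the bound upward via monotonicity of $\P_t(Y=0)$, whereas the paper reapplies the first-moment bound at every $r \in [p-\eps,\, p]$ to lower-bound the variance pointwise.
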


\begin{proof}
  For the sake of brevity, denote $g_\eps =
  g_\eps(X_1(T),\ldots,X_{|V|}(T))$ (which equals $Y$ for
  $\eps=\alpha$). From the proof of Theorem~\ref{thmQuantitativeTransitive}, we
  have
\[
\frac{\partial \P_p(g_{\alpha} = 0)}{\partial p}
\ge
C (\log m) \Var_{\P_p}(g_{\alpha}).
\]
On the other hand, Corollary~\ref{cor:in-expectation} gives us that
for any $\eps>0$
\[
\P_p(g_{p - \eps} = 0) = \P_p \left(\sum_{v \in V} X_v(t) \le (1 - p + \eps)|V|\right) \ge \eps
\]
and so
$\Var_{\P_p}(g_{p-\eps}) \ge \eps \P_p(g_{p-\eps})$
for any $\eps$.

Fix $\alpha < p$ and set $\eps = (p-\alpha) / 2$.
Then for any $r \in [\alpha + \eps, p]$,
$\Var_{\P_r}(g_\alpha) \ge \eps \P_r(g_{\alpha})$
and so
we can solve the differential inequality
\[
\frac{\partial \P_r(g_{\alpha} = 0)}{\partial r}
\ge C \eps (\log m) \P_r(g_\alpha)
\]
in the range $[p - \eps, p]$, with initial
condition $\P_{p-\eps} (g_\alpha = 0) \ge \eps$.
We obtain
\[
\P_p(Y=0) = \P_p (g_\alpha=0) \ge 1 - (1-\eps) \exp\left(-C\eps^2 \log m) \right)
\]
and we send $m \to \infty$.
\end{proof}

\subsection{An example: cycles}

Let $G_n$ be a cycle on $n$ vertices, where each vertex has a
self-loop, and recall that $p = \frac{1 + \delta}{2}$.  When the mode of
interaction is majority dynamics, we can explicitly
calculate the distribution
of $\lim_{t\to\infty} X_v(t)$. This
will yield a wider bound (compared to Theorem~\ref{thm:threshold})
on the range of $\alpha$ for which
$\lim_{n\to \infty}\mu_\delta(f_{n,\alpha}) \to 1$. Of particular
interest are the cases when $\delta \to 0$ or $\delta \to 1$;
for small $\delta$,
$\alpha < \frac{1}{2} + \frac{5}{6} \delta - \Omega(\delta^3)$
turns out to imply
$\lim_{n\to\infty} \mu_\delta(f_{n,\alpha}) \to 1$, while
for large $\delta$, if we set
$\epsilon = 1-\delta$, then
$\alpha < 1 - \frac{1}{2} \epsilon^2$ is sufficient.
Therefore, the bound in
Theorem~\ref{thm:threshold} is not tight: for $\delta$ close to zero,
one can take $\alpha \approx \frac{1}{2} + \frac{5}{6} \delta$
while Theorem~\ref{thm:threshold} only guarantees
that $\alpha = \frac{1}{2} + \frac{1}{2} \delta$ will work; for
$\delta$ close to 1, $\alpha \approx 1 - \frac{1}{2} \epsilon^2$
is sufficient, but Theorem~\ref{thm:threshold} only
gives $\alpha = 1 - \frac{1}{2} \epsilon$.

The analysis of the cycle is relatively simple because the eventual
state of the voters can be easily foretold from the initial
state. First of all, whenever two (or more) adjacent voters share the
same opinion, they will retain that opinion forever. Moreover, strings
of voters whose opinions alternate will gradually turn into strings
of voters with the same opinion, as in the following example:
\[
  \begin{matrix}
    \text{time } t & \cdots & 1 & 1 & 0 & 1 & 0 & 1 & 0 & 0 & \cdots \\
    \text{time } t + 1& \cdots & 1 & 1 & 1 & 0 & 1 & 0 & 0 & 0 & \cdots \\
    \text{time } t + 2& \cdots & 1 & 1 & 1 & 1 & 0 & 0 & 0 & 0 & \cdots 
  \end{matrix}
\]
In fact, one can tell the eventual opinion of a voter $v$ with the
following simple rule: let $V \ge 0$ be the smallest number such that
$X_{v-V} = X_{v-V-1}$ and let $W \ge 0$ be the smallest number such that
$X_{v+W} = X_{v+W+1}$ (assuming that such $V$ and $W$ exist, which will
only fail to happen in the unlikely event that the whole
cycle consists of alternating opinions).
If $V \le W$ then $X_v(t) = X_{v-V}(0)$
for all $t \ge V$.
On the other hand, if $W \le V$ then
$X_v(t) = X_{v+W}(0)$ for all $t \ge W$.
(If $V = W$ then $X_{v -V}(0) = X_{v + W}(0)$ because
$X_{v-V}(0) = X_v(0)$ if and only if $V$ is even, and similarly
for $W$.)

\begin{proposition}
For any $v$,
\[
  \lim_{T\to\infty} \lim_{n \to \infty} \P(X_v(t) = 0 \text{ for all }
  t \ge T) =
\frac{2p^2 - p^3}{1 - p + p^2}
= \frac{1}{2} + \frac{5\delta - \delta^3}{6 + 2\delta^2}
= 1 - \frac{4 \epsilon^2 - \epsilon^3}{8 - 4 \epsilon + 2\epsilon^2}.
\]
\end{proposition}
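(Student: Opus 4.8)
The plan is to reduce the probabilistic statement to the deterministic ``eventual opinion'' rule stated just above the proposition, and then carry out an explicit computation on the bi-infinite line $\Z$. First I would argue that the iterated limit collapses to a single quantity. By that rule, once $V$ and $W$ are known the value $X_v(t)$ is constant and equal to the color of the nearer of the two bounding agreements for every $t \ge \min(V,W)$, while for $t < \min(V,W)$ the vertex is still inside the alternating block and its value flips each step. Hence, for fixed $T$, the event $\{X_v(t)=0 \text{ for all } t \ge T\}$ coincides with $\{\text{eventual color of } v \text{ is } 0,\ \min(V,W)\le T\}$ up to a lower-order correction in $T$. Because this event is determined by a window of size $\Ordo(T)$ around $v$, and the odd cycle looks locally like a path, $\lim_{n\to\infty}$ replaces $G_n$ by a bi-infinite i.i.d.\ sequence centered at $v$ (the only configuration lost, an entirely alternating cycle, has probability tending to $0$, so $V,W<\infty$ with probability tending to one). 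Sending $T\to\infty$ then removes the constraint $\min(V,W)\le T$, since $\min(V,W)<\infty$ almost surely. Thus it suffices to compute on $\Z$ the probability that the eventual color of a fixed vertex is $0$.

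The key simplification is a parity identity. Writing $m=\min(V,W)$, the eventual color is read off from whichever bounding agreement is nearer, and since $v$ sits in a maximal alternating block the color at distance $d$ from $v$ is $X_v(0)$ or $1-X_v(0)$ according to the parity of $d$; therefore the eventual color equals $X_v(0)$ when $m$ is even and $1-X_v(0)$ when $m$ is odd (the tie case $V=W$ is consistent because then the two bounding colors agree and carry the same parity shift). Consequently the eventual color is $0$ exactly when either $X_v(0)=0$ and $m$ is even, or $X_v(0)=1$ and $m$ is odd. I would then condition on $X_v(0)=c$: conditioned on this, $V$ and $W$ are independent and identically distributed, being functions of disjoint blocks of i.i.d.\ coordinates, and their tail is explicit. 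Indeed forcing $W\ge k$ forces an alternating pattern on the $k$ coordinates next to $v$, contributing one factor of $p$ or $1-p$ each according to parity, so $\P(W\ge k \mid X_v(0)=0)=p^{\lfloor k/2\rfloor}(1-p)^{\lceil k/2\rceil}$, with the roles of $p$ and $1-p$ exchanged for $c=1$.

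With these ingredients the rest is a geometric sum. By conditional independence $\P(m\ge k\mid X_v(0)=c)=\big(\P(W\ge k\mid X_v(0)=c)\big)^2$, and summing the telescoping differences $\P(m\ge k)-\P(m\ge k+1)$ over even (respectively odd) $k$ gives $\P(m \text{ even}\mid X_v(0)=0)$ and $\P(m \text{ odd}\mid X_v(0)=1)$ as rational functions of $p$ with common denominator $1-p^2(1-p)^2$. Weighting by $\P(X_v(0)=0)=p$ and $\P(X_v(0)=1)=1-p$ and adding produces $\P(\text{eventual color}=0)$. The final simplification uses the factorization $1-p^2(1-p)^2=(1-p+p^2)(1+p-p^2)$: the numerator turns out to carry the matching factor $1+p-p^2$, leaving $\frac{2p^2-p^3}{1-p+p^2}$. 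The two remaining displayed forms follow from the purely algebraic substitutions $p=\tfrac12(1+\delta)$ and $\epsilon=1-\delta$.

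I expect the main obstacle to lie in the first paragraph rather than in the arithmetic: making the interchange of the two limits rigorous, pinning down exactly which events survive as $n\to\infty$ (via the local-determination argument and the vanishing probability of a fully alternating cycle), and getting the parity bookkeeping of the key identity exactly right, including the tie case $V=W$. Once the identity relating the eventual color to the parity of $\min(V,W)$ is established, the geometric summation and the final factorization are routine.
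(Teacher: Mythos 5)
Your proposal is correct, and its outer skeleton matches the paper's: both identify the event $\{X_v(t)=0 \ \forall t\ge T\}$ with $\{\text{eventual color of } v \text{ is } 0\}\cap\{\min(V,W)\le T\}$, pass to the bi-infinite line by local determination of this event (the paper phrases this as the events being identical on $\Z$ and on the cycle once $n>2T$), discard the fully alternating configuration, and let $T\to\infty$ remove the constraint on $\min(V,W)$. Where you genuinely diverge is the computation on $\Z$. The paper packages the two sides of $v$ into a single i.i.d.\ sequence of $4$-tuples $Y_k=(X_{-2k},X_{1-2k},X_{2k-1},X_{2k})$, takes $J$ to be the first index at which $Y_J$ breaks the alternating pattern $(0,1,1,0)$, and partitions the possible values of $Y_J$ into sets $A_0,A_1$ that decide the eventual color; the answer follows from the renewal identity $\P(Y_J\in A_0)=\P(Y_1\in A_0)/\P(Y_1\in A_0\cup A_1)$, with independence of $J$ and $Y_J$ giving the exact finite-$T$ product $\P(Y_J\in A_0)\P(2J-2\le T)$. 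You instead exploit that, conditioned on $X_v(0)$, the one-sided distances $V$ and $W$ are i.i.d.\ with explicit tails $p^{\lfloor k/2\rfloor}(1-p)^{\lceil k/2\rceil}$, reduce the eventual color to the parity of $\min(V,W)$, and sum a geometric series. The two routes produce identical intermediate quantities: your conditional probabilities $\frac{2p-p^2}{1-p^2(1-p)^2}$ and $\frac{2p^3-p^4}{1-p^2(1-p)^2}$ are exactly the paper's $\P(Y_J\in A_0)$ and $\P(Y_J\in A'_0)$, since $1-p^2(1-p)^2=1-p^2+2p^3-p^4$, and your parity bookkeeping (including the tie case $V=W$) is sound. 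The trade-off: your two-sided-independence/parity argument is more elementary and makes the $T\to\infty$ limit immediate, whereas the paper's stopped-block structure buys a cleanly factorized formula at finite $T$ — your event $\{\min(V,W)\text{ even},\ \min(V,W)\le T\}$ does not factor, since the parity and the magnitude of the minimum are dependent — though this extra information is immaterial for the stated double limit.
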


As we observed following Corollary~\ref{cor:in-expectation},
this implies that if
$\alpha < \frac{1}{2} + \frac{5\delta - \delta^3}{6 + 2\delta^2}$
and the number of interaction rounds is sufficiently large
(depending on $\alpha$ and $p$), then
$\mu_\delta(f_{n,\alpha}) \to 1$.

\begin{proof}
For brevity, we will write $X_v$ instead of $X_v(0)$ for the initial
state of vertex $v$.
Instead of majority dynamics on the cycle, consider majority dynamics on
$\Z$; we will see later that these are essentially the same
when $n$ is large.
We may assume without loss of generality that $v = 0$.
As in the discussion above, let $V \ge 0$ be minimal such that
$X_{-V} = X_{-V-1}$
and let $W \ge 0$ be minimal such that $X_W = X_{W+1}$.

Let us first condition on $X_0(0) = 0$.
Consider the i.i.d.\ sequence
\[ Y_k = (X_{-2k}, X_{1-2k}, X_{2k-1}, X_{2k}) \in \{0, 1\}^4. \]
If $Y_1, \dots, Y_j = (0, 1, 1, 0)$ then the sequence
$X_{-2j}, \dots, X_{2j}$ consists of alternating zeros and ones, 
and so $V, W \ge 2j$. Define $A_0, A_1 \subset \{0, 1\}^4$
by
\begin{align*}
A_0 &= \{(a, b, c, d): b = 0 \text{ or } c = 0\} \\
A_1 &= \{(a, b, c, d): a = b = c = 1 \text{ or } b = c = d = 1\}
\end{align*}
Note that $A_0 \cap A_1 = \emptyset$ and
$\{0, 1\}^4 \setminus (A_0 \cup A_1) = \{(0, 1, 1, 0)\}$.
Therefore, if $J$ is minimal such that
$Y_J \ne (0, 1, 1, 0)$ then $Y_J$ is in either $A_0$ or $A_1$.
If $Y_J \in A_0$ then either $W = 2J-2$ and $X_W = 0$:
\[
  \begin{matrix}
    X_0 & X_1 & X_2 & X_3 & X_4 & \cdots & X_{2J-3} & X_{2J-2} & X_{2J-1} \\
    0   & 1   & 0   & 1 & 0 & \cdots & 1 & 0 & 0 \\
  \end{matrix}
\]
or $V = 2J-2$
and $X_{-V} = 0$:
\[
  \begin{matrix}
    X_{-(2J -1)} & X_{-(2J-2)} & X_{-(2J-3)} & \cdots & X_{-4} & X_{-3}
    & X_{-2} & X_{-1} & X_0 \\
    0 & 0 & 1 & \cdots & 0 & 1 & 0 & 1 & 0
  \end{matrix}
\]
In either of these cases, $X_0(t) = 0$ for all
$t \ge 2J-2$.
Conversely, if $Y_J \in A_1$ then either $X_W = 1$ or $X_V = 1$
and $X_0(t) = 1$ for all $t \ge 2J-1$.
Thus, (using the fact that $J$ and $Y_J$ are independent)
\begin{equation}\label{eq:cycle-example-1}
  \P(X_0(t) = 0 \text{ for all $t \ge T$ }| X_0 = 0) = \P(Y_J \in A_0)
\P(2J-2 \le T).
\end{equation}
Since the $Y_j$ are i.i.d,
\begin{equation}\label{eq:cycle-example-2}
  \P(Y_J \in A_0) = \frac{\P(Y_1 \in A_0)}{\P(Y_1 \in A_0 \cup A_1)}
= \frac{2p - p^2}{2p - p^2 + 2(1-p)^3 - (1-p)^4}
= \frac{2p - p^2}{1 -  p^{2} + 2 p^{3} -  p^{4}},
\end{equation}
where we have computed $\P(Y_1 \in A_i)$ by the inclusion/exclusion
formulas
\begin{align*}
  \P(Y_1 \in A_0) &= \P(X_{-1} = 0) + \P(X_1 = 0) - \P(X_{-1} = X_1 = 0) \\
  \P(Y_1 \in A_1) &= \P(X_{-2} = X_{-1} = X_1 = 1)
  + \P(X_{-1} = X_1 = X_2 = 1)
  - \P(X_{-2} = \cdots = X_2 = 1).
\end{align*}

The case for $X_0 = 1$ is similar: we define
\begin{align*}
A'_0 &= \{(a, b, c, d): a = b = c = 0 \text{ or } b = c = d = 0\} \\
A'_1 &= \{(a, b, c, d): b = 1 \text{ or } c = 1\}.
\end{align*}
If $J'$ is minimal such that $Y_{J'} \ne (1, 0, 0, 1)$ then
$Y_{J'} \in A'_0$ implies $X_0(t) = 0$ for $t \ge 2J' - 1$, while
$Y_{J'} \in A'_1$ implies $X_0(t) \to 1$ for $t \ge 2J' - 2$. Since
$\P(Y_1 \in A'_0) = 2p^3 - p^4$ and $\P(Y_1 \in A'_1) =
2(1-p) - (1-p)^2$, we have
\begin{equation}\label{eq:cycle-example-3}
  \frac{\P(X_0(t) = 0 \text{ for all } t \ge T | X_0 = 1)}
  {\P(2J-1 \le T)}
  = \frac{\P(Y_1 \in A'_0)}{\P(Y_1 \in A'_0 \cap A'_1)}
  = \frac{2p^3 - p^4}{1 - p^2 + 2p^3 - p^4}.
\end{equation}

To transition back from dynamics on $\Z$ to dynamics on the $n$-cycle,
note that the event $\{X_0(t) = 0 \text{ for all } t \ge T\}$ is the
same event on $\Z$ and on the $n$-cycle, provided that $n > 2T$.
In particular,~\eqref{eq:cycle-example-1} and~\eqref{eq:cycle-example-2}
imply that
\[
  \lim_{n \to \infty}\P(X_0(t) = 0 \text{ for all $t \ge T$ }| X_0 = 0)
  = \P(2J-2 \le T) \frac{2p - p^2}{1 -  p^{2} + 2 p^{3} -  p^{4}}
\]
for majority dynamics on the $n$-cycle
(and similarly conditioned on $X_0 = 1$, using~\eqref{eq:cycle-example-3}.
Since $\lim_{T \to \infty} \P(2J-2 \le T) = 1$,
\[
  \lim_{T \to \infty}
  \lim_{n \to \infty}\P(X_0(t) = 0 \text{ for all $t \ge T$ }| X_0 = 0)
  = \frac{2p - p^2}{1 -  p^{2} + 2 p^{3} -  p^{4}}.
\]
(and similarly conditioned on $X_0 = 1$). Finally,
\begin{align*}
  &\P(X_0(t) = 0 \text{ for all } t \ge T) \\
  &=
  p \P(X_0(t) = 0 \text{ for all } t \ge T | X_0 = 0)
  + (1-p) \P(X_0(t) = 0 \text{ for all } t \ge T | X_0 = 1)
  &\to \frac{2p^2 - p^3}{1 - p + p^2}
\end{align*}
as $T, n \to \infty$.
The formulas in terms of $\delta$ and $\epsilon$ are obtained by
substituting $p = \frac{1 + \delta}{2} = 1 - \frac{\epsilon}{2}$.
\end{proof}

\section{Expander graphs converge to unanimity}
\label{sec:unanimity}

\subsection{Majority dynamics with two alternatives}
\label{sec:majority-expanders}
In this section we again consider the case that $q=2$ and majority
dynamics (i.e., each voter adopts the majority opinion of its
neighbors), with a population wide majority vote at time $T$. To avoid
the issue of ties, we assume that $|N_v|$ is odd for all $v$ and that
$n$ is odd.

Let $G$ be a graph and $M$ its adjacency matrix, so that $M_{vu}$ is 1
if $(u,v) \in E$ and 0 otherwise.  We say that $G$ is a
$\lambda$-expander if the second-largest absolute eigenvalue of $M$ is
at most $\lambda$ (cf.~\cite{HLW:expanders}).  Expander graphs have
particularly nice properties under the iterated majority dynamics. One
reason for this is that in an expander graph, the number of edges
between disjoint sets $A$ and $B$ of vertices is almost completely
determined by the cardinalities of $A$ and $B$. We state this formally
in Lemma~\ref{lem:mixing} below.

Denote $E(A,B) = 1_A^T M 1_B$, where $A$ and $B$ be sets of vertices.
Note that if $A$ and $B$ are disjoint then $E(A,B)$ is the number
of edges between $A$ and $B$, and if $A$ and $B$ are not disjoint, then
$E(A,B)$ double-counts edges from $A \cap B$ to
itself). Alternatively, $E(A,B)$ is the number of ``edge-ends'' of
edges with one end in $A$ and another in $B$.

Recall that a graph $d$-regular if all vertices have degree $d$, i.e.,
$|N_v|=d$ for all $v \in V$.
\begin{lemma}[Expander mixing lemma (cf.~\cite{alon2008probabilistic})]\label{lem:mixing}
  If $G$ is a $d$-regular $\lambda$-expander with $n$ vertices then
  \begin{align*}
    \Big| E(A,B) - \frac{|A| |B| d}{n} \Big| \le \lambda \sqrt{|A| |B|}
  \end{align*}
  for every $A, B \subset G$.
\end{lemma}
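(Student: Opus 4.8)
The plan is to prove the bound via the spectral decomposition of the adjacency matrix $M$, which is the standard route for the expander mixing lemma. Since $G$ is undirected, $M$ is real and symmetric, so it admits an orthonormal eigenbasis $v_1,\dots,v_n$ with real eigenvalues. Because $G$ is $d$-regular, the all-ones vector $\ind$ is an eigenvector of $M$ with eigenvalue $d$, and this is the largest eigenvalue; normalizing, I take $v_1 = \ind/\sqrt n$ with $\lambda_1 = d$. The $\lambda$-expander hypothesis says exactly that every other eigenvalue $\lambda_i$ (for $i \ge 2$) satisfies $|\lambda_i| \le \lambda$.

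First I would expand the two indicator vectors in this eigenbasis, writing $1_A = \sum_i a_i v_i$ and $1_B = \sum_i b_i v_i$ with $a_i = \langle 1_A, v_i\rangle$ and $b_i = \langle 1_B, v_i\rangle$. The key observation is that the top coefficients are $a_1 = |A|/\sqrt n$ and $b_1 = |B|/\sqrt n$, since $\langle 1_A, \ind\rangle = |A|$ and likewise for $B$. Using $E(A,B) = 1_A^T M 1_B$ together with the diagonality of $M$ in the eigenbasis, I obtain $E(A,B) = \sum_i \lambda_i a_i b_i = \frac{|A||B|d}{n} + \sum_{i\ge 2}\lambda_i a_i b_i$, so the quantity to be bounded is precisely the tail sum $\sum_{i\ge 2}\lambda_i a_i b_i$.

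The final step is to estimate this tail. Applying $|\lambda_i|\le\lambda$ termwise, then Cauchy--Schwarz, and finally Parseval, I would get
\[
\Big|\sum_{i\ge 2}\lambda_i a_i b_i\Big| \le \lambda \Big(\sum_{i\ge 2} a_i^2\Big)^{1/2}\Big(\sum_{i\ge 2} b_i^2\Big)^{1/2} \le \lambda\,\|1_A\|\,\|1_B\| = \lambda\sqrt{|A||B|},
\]
where the last equality uses $\|1_A\|^2 = |A|$ and $\|1_B\|^2 = |B|$, and where dropping the $i=1$ terms only decreases the sums of squares. This is exactly the claimed inequality.

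There is no serious obstacle here; the only points requiring care are verifying that $d$ is genuinely the top eigenvalue, so that separating off the $i=1$ term is legitimate and contributes the main term $\frac{|A||B|d}{n}$, and noting that the expander condition is an \emph{absolute-value} bound on all non-principal eigenvalues, which is what allows the termwise estimate to go through regardless of the signs of the $\lambda_i$. Everything else is routine linear algebra.
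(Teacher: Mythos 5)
Your proof is correct: it is the standard spectral-decomposition argument for the expander mixing lemma, and every step (separating the top eigenvector $\ind/\sqrt n$, bounding the tail via $|\lambda_i|\le\lambda$, Cauchy--Schwarz, and Parseval) goes through exactly as you describe. The paper itself gives no proof of this lemma --- it is quoted with a citation to Alon--Spencer --- and your argument is essentially the canonical one from that reference, so there is nothing to reconcile; if anything, note that keeping the subtracted $i=1$ terms would even yield the slightly stronger bound $\lambda\sqrt{|A||B|(1-|A|/n)(1-|B|/n)}$.
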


It follows easily from the expander mixing lemma that medium-sized
majorities are unstable under iterated majority dynamics. That is,
if a reasonable majority of people prefer one outcome then very quickly
a large majority of people will prefer that outcome.

\begin{proposition}\label{prop:unstable}
  Let $q=2$, let $n$ be odd, let $G$ be a $d$-regular
  $\lambda$-expander with $d$ odd, and let the mode of interaction be
  majority dynamics with a majority vote at time $T$.

  Let $N_0(t)$ be the number of agents that vote $0$ at time $t$ and
  let $N_1(t)$ be the number of agents that vote $1$.  If $N_0(t)
  \ge N_1(t) + \alpha n$ then $N_1(t+1) \le \frac{2
    \lambda^2}{\alpha^2 d^2} n$.
\end{proposition}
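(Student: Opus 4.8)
The plan is to do a single application of the expander mixing lemma (Lemma~\ref{lem:mixing}) to two cleverly chosen vertex sets: the set of current ``$1$''-voters and the set of voters who will flip to (or stay at) $1$ in the next round. First I would set up notation. Let $A = \{v : X_v(t) = 0\}$ and $B = \{v : X_v(t) = 1\}$, so that $|A| = N_0(t)$, $|B| = N_1(t)$, and $|A| + |B| = n$. The hypothesis $N_0(t) \ge N_1(t) + \alpha n$ combined with $|A|+|B| = n$ immediately yields the bound
\begin{align*}
  |B| \le \frac{(1-\alpha)n}{2} \le \frac{n}{2},
\end{align*}
which is the only place the majority hypothesis is used. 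The quantity I want to control is $N_1(t+1) = |S|$, where $S = \{v : X_v(t+1) = 1\}$.

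The heart of the argument is to estimate $E(S,B) = 1_S^T M 1_B = \sum_{v \in S} |N_v \cap B|$ in two ways. For the lower bound, note that $v \in S$ means a majority of $v$'s neighbors voted $1$ at time $t$, i.e.\ lie in $B$; since $d$ is odd this majority is strict, so $|N_v \cap B| \ge (d+1)/2 > d/2$. Summing over $v \in S$ gives $E(S,B) \ge \tfrac{d}{2}|S|$. (Note that $S$ and $B$ may overlap, but this causes no difficulty: the mixing lemma is stated for the bilinear form $1_S^T M 1_B$, which is exactly the edge-end count I am bounding.) For the upper bound I apply Lemma~\ref{lem:mixing} directly:
\begin{align*}
  E(S,B) \le \frac{|S||B|d}{n} + \lambda\sqrt{|S||B|}.
\end{align*}

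Combining the two estimates and inserting $\tfrac{|B|d}{n} \le \tfrac{(1-\alpha)d}{2}$ gives $\tfrac{d}{2}|S| \le \tfrac{(1-\alpha)d}{2}|S| + \lambda\sqrt{|S||B|}$, which rearranges to $\tfrac{\alpha d}{2}|S| \le \lambda\sqrt{|S||B|}$. Squaring, dividing by $|S|$ (the case $|S|=0$ being trivial), and using $|B| \le n/2$ then yields
\begin{align*}
  |S| \le \frac{4\lambda^2 |B|}{\alpha^2 d^2} \le \frac{2\lambda^2}{\alpha^2 d^2}\, n,
\end{align*}
which is exactly the claimed bound on $N_1(t+1)$. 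The calculation is short and I do not anticipate a genuine obstacle; the only points requiring care are the strict-majority lower bound $|N_v \cap B| > d/2$ (where oddness of $d$ is essential to rule out an exact tie) and confirming that the mixing lemma may be applied even though $S$ and $B$ need not be disjoint.
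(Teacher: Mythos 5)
Your proof is correct and takes essentially the same approach as the paper: the same key observation (each $v$ voting $1$ at time $t+1$ has a strict majority of its neighbors in $B$) combined with the expander mixing lemma, yielding the same constant. The only cosmetic difference is that the paper writes the observation as $E(A_1(t+1),A_1(t)) \ge E(A_1(t+1),A_0(t))$ and applies the mixing lemma to both sides, whereas you use $d$-regularity to phrase it as $E(S,B) \ge \tfrac{d}{2}|S|$ and apply the lemma only once --- by regularity these starting inequalities are equivalent.
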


\begin{proof}
  Let $A_0(t)$ be the set of agents that vote $0$ at time $t$, and
  define $A_1(t)$ similarly. Then, by the nature of majority dynamics,
  every $v \in A_1(t+1)$ has more than half of its neighbors in
  $A_1(t)$. Summing over every $v \in A_1(t+1)$, we have
  $E(A_1(t+1),A_1(t)) \ge E(A_1(t+1), A_0(t))$. By applying the
  expander mixing lemma to both sides,
  \begin{align*}
    \frac{N_1(t+1) N_0(t) d}{n} - \lambda \sqrt{N_1(t+1) N_0(t)} 
    \le \frac{N_1(t+1) N_1(t) d}{n} + \lambda \sqrt{N_1(t+1) N_1(t)}.    
  \end{align*}
  Rearranging, and since $N_0(t) - N_1(t) \ge \alpha n$,
  \begin{align*}
    \alpha \sqrt{N_1(t+1)} \le \frac{\lambda}{d} (\sqrt{N_1(t)} + \sqrt{N_0(t)}) \le \frac{\lambda}{d} \sqrt{2n}.    
  \end{align*}
\end{proof}

Applying the proposition twice, we see that an imbalance of $\frac{4
  \lambda n}{d}$ implies that a large, stable majority will form
within one time-step.

\begin{corollary}\label{cor:almost-consensus}
If $N_0(t) \ge N_1(t) + \frac{4 \lambda n}{d}$ and $\frac{\lambda}{d} \le \frac{3}{16}$ then
$N_1(s) \le \frac{n}{8}$ for all $s \ge t+1$.
\end{corollary}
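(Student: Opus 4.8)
The plan is to apply Proposition~\ref{prop:unstable} twice: once to establish the claim at $s = t+1$, and then repeatedly (by induction) to propagate it forward to all later times. For the base case, I would apply the proposition directly with $\alpha = \frac{4\lambda}{d}$, which is exactly the hypothesized imbalance $N_0(t) \ge N_1(t) + \frac{4\lambda n}{d}$. Substituting this $\alpha$ into the conclusion $N_1(t+1) \le \frac{2\lambda^2}{\alpha^2 d^2}n$ gives $N_1(t+1) \le \frac{2\lambda^2}{(4\lambda/d)^2 d^2}n = \frac{n}{8}$, so the desired bound already holds at time $t+1$, with no use yet of the assumption $\frac{\lambda}{d} \le \frac{3}{16}$.

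For the inductive step I would show that the bound $N_1(s) \le \frac{n}{8}$ is self-reproducing. The key observation is that since every voter votes either $0$ or $1$, we have $N_0(s) + N_1(s) = n$, so $N_1(s) \le \frac{n}{8}$ forces $N_0(s) \ge \frac{7n}{8}$ and hence an imbalance $N_0(s) - N_1(s) \ge \frac{3n}{4}$. Applying Proposition~\ref{prop:unstable} once more, now with $\alpha = \frac34$, yields $N_1(s+1) \le \frac{2\lambda^2}{(3/4)^2 d^2}n = \frac{32(\lambda/d)^2}{9}n$. It is precisely here that the hypothesis $\frac{\lambda}{d} \le \frac{3}{16}$ enters: it gives $\frac{32(\lambda/d)^2}{9} \le \frac{32}{9}\cdot\frac{9}{256} = \frac18$, so $N_1(s+1) \le \frac{n}{8}$. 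Combining the base case with this step, induction on $s$ delivers $N_1(s) \le \frac{n}{8}$ for every $s \ge t+1$.

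There is no real obstacle here beyond bookkeeping: the whole argument is a two-line chaining of the preceding proposition, and the only thing to verify carefully is that the two choices of $\alpha$ (first $\frac{4\lambda}{d}$, then $\frac34$) make the constants line up, with $\frac{\lambda}{d} \le \frac{3}{16}$ being exactly the threshold that closes the induction. I would simply state the two applications, note the use of $N_0(s)+N_1(s)=n$ to convert the population bound into the imbalance needed to re-invoke the proposition, and conclude.
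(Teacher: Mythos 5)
Your proof is correct and follows essentially the same route as the paper: one application of Proposition~\ref{prop:unstable} with $\alpha = \frac{4\lambda}{d}$ gives $N_1(t+1) \le \frac{n}{8}$, after which the resulting imbalance of $\frac{3n}{4}$ lets the proposition be reapplied at every subsequent step. The only cosmetic difference is in the inductive step, where the paper reuses $\alpha = \frac{4\lambda}{d}$ (checking $\frac{3n}{4} \ge \frac{4\lambda n}{d}$ via $\frac{\lambda}{d} \le \frac{3}{16}$), while you take $\alpha = \frac{3}{4}$ and use the same hypothesis to bound $\frac{32(\lambda/d)^2}{9} \le \frac{1}{8}$ --- two equivalent rearrangements of the same inequality.
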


\begin{proof}
Taking $\alpha = \frac{4 \lambda}{d}$ in Proposition~\ref{prop:unstable},
we have $N_1(t+1) \le \frac{n}{8}$. Then $N_0(t+1) \geq  N_1(t+1) + \frac{3n}{4}
\ge \frac{4 \lambda n}{d}$ and so we can continue applying Proposition~\ref{prop:unstable}
indefinitely with $\alpha = \frac{4 \lambda}{d}$.
\end{proof}

In order to show that a complete consensus is eventually achieved, we
will use a result of~\cite{GO:80}, who proved that majority dynamics
will eventually enter a cycle with period at most two.

\begin{proposition}\label{prop:consensus}
  If $\frac{\lambda}{d} \le \frac{3}{16}$ and $N_0(t) - N_1(t) \ge
  \frac{4 \lambda n}{d}$ for some $t$, then majority dynamics converge
  to all $0$.
\end{proposition}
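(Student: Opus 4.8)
The plan is to combine three ingredients: the stability estimate of Corollary~\ref{cor:almost-consensus}, the eventual periodicity of synchronous majority dynamics due to~\cite{GO:80}, and a direct application of the expander mixing lemma (Lemma~\ref{lem:mixing}) to rule out every periodic orbit except the all-$0$ configuration.

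First I would apply Corollary~\ref{cor:almost-consensus}: since $N_0(t) - N_1(t) \ge 4\lambda n/d$ and $\lambda/d \le 3/16$, we get $N_1(s) \le n/8$ for every $s \ge t+1$, so from time $t+1$ onwards at most $n/8$ agents ever vote $1$. Next I would invoke the result of~\cite{GO:80}, which guarantees that the dynamics eventually becomes periodic with period at most two. Hence there is a time after which the configuration either is a fixed point or alternates between two configurations $\sigma$ and $\sigma' = F(\sigma)$ (with $\sigma = F(\sigma')$), where $F$ denotes the one-step majority update. Writing $S$ and $S'$ for the sets of agents voting $1$ in $\sigma$ and $\sigma'$, both have size at most $n/8$ by the previous step.

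The heart of the argument is to show that the only such orbit is $S = S' = \emptyset$. Because $d$ is odd, a vertex votes $1$ in $\sigma$ precisely when strictly more than $d/2$ of its neighbors vote $1$ in $\sigma'$; summing this over $v \in S$ gives $E(S,S') > |S|\,d/2$, and symmetrically $E(S',S) > |S'|\,d/2$. Since $M$ is symmetric, $E(S,S') = E(S',S)$, so in fact $E(S,S') > \max(|S|,|S'|)\,d/2$. If $S$ and $S'$ are not both empty, take without loss of generality $|S'| \ge |S| > 0$ (if one set is empty then $\sigma$ is all-$0$ and hence so is $\sigma'$, which is the desired conclusion). Applying Lemma~\ref{lem:mixing} to the pair $(S,S')$ gives
\[
  |S'|\frac{d}{2} < E(S,S') \le \frac{|S|\,|S'|\,d}{n} + \lambda\sqrt{|S|\,|S'|}.
\]
Dividing through by $|S'|$ and using $\sqrt{|S|/|S'|} \le 1$ together with $|S| \le n/8$ yields $d/2 < d/8 + \lambda$, i.e.\ $\lambda/d > 3/8$, contradicting $\lambda/d \le 3/16$. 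Thus every period-at-most-two orbit whose $1$-sets have size at most $n/8$ is the all-$0$ fixed point, and the dynamics converges to all $0$.

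The step I expect to require the most care is the genuine period-two case: the naive bound $E(S,S') > |S|\,d/2$ alone is too weak, since dividing by $|S|$ produces an unfavorable factor $\sqrt{|S'|/|S|} \ge 1$. The observation that resolves this is the symmetry $E(S,S') = E(S',S)$, which lets us replace $|S|$ by $\max(|S|,|S'|)$ on the left and so divide by the \emph{larger} cardinality, rendering the factor $\sqrt{|S|/|S'|} \le 1$ harmless. A minor boundary point to verify is that if one of the two sets is empty it forces the other to be empty as well, landing directly on the all-$0$ configuration.
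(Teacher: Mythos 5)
Your proof is correct, and while it shares the paper's skeleton --- Corollary~\ref{cor:almost-consensus} to confine the $1$-voters to at most $n/8$ of the population, the periodicity result of~\cite{GO:80}, and the expander mixing lemma (Lemma~\ref{lem:mixing}) --- the endgame is genuinely different. The paper partitions the vertices by eventual behavior into $A_{00}, A_{11}, A_{01}, A_{10}$, bounds $|A_{00}^c| \le n/4$, and applies the mixing lemma to $E(A_{00}^c, A_{00}^c)$; together with $d$-regularity this gives $E(A_{00}, A_{00}^c) > \frac{d}{2}|A_{00}^c|$, so if $A_{00}^c$ were non-empty, some \emph{single vertex} of $A_{00}^c$ would have a strict majority of its neighbors converging to $0$, and the dynamics would force that vertex to converge to $0$ as well --- a contradiction. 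You instead work directly with the two $1$-sets $S, S'$ of the limit cycle, use the majority rule on \emph{both} sides of the cycle together with the symmetry $E(S,S') = E(S',S)$ to obtain $E(S,S') > \max(|S|,|S'|)\,d/2$, and contradict the mixing lemma by pure counting. Your route avoids both the four-way decomposition and the vertex-level dynamical step, and your symmetry observation (which lets you divide by the larger of the two sets) is exactly the right device to handle the genuine period-two case; it is a clean, self-contained alternative. What the paper's formulation buys in exchange is painless generalization: in the $q$-alternative setting (Proposition~\ref{prop:consensus-q}) the step ``a vertex with more than half its neighbors in the converge-to-$a$ set must itself adopt $a$'' works verbatim for plurality dynamics, whereas your two-sided strict-majority bound uses $q=2$ --- for plurality with $q>2$, a vertex voting against $a$ only guarantees that \emph{at least} half (not more than half) of its neighbors vote against $a$. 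That said, the resulting non-strict inequality $d/2 \le d/8 + \lambda$ still contradicts $\lambda/d \le 3/16$, so your argument too would survive the extension with minor adjustment.
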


\begin{proof}
Since majority dynamics converge to a cycle with period at most two,
we can divide the vertices of $G$ into four sets: $A_{00}$ is the set
of nodes that converge to $0$, $A_{11}$ is the set that converge to $1$,
with $A_{01}$ and $A_{10}$ being the two sets of nodes that eventually
alternate between $0$ and $1$. By Corollary~\ref{cor:almost-consensus},
$|A_{11}| + \max\{|A_{01}|, |A_{10}|\} \le \frac{n}{8}$, and so
$|A_{00}^c| = |A_{11}| + |A_{01}| + |A_{10}| \le \frac{n}{4}$.
By the expander mixing lemma,
\begin{align*}
|E(A_{00}^c, A_{00}^c)| \le |A_{00}^c|^2 \frac{d}{n} + \lambda |A_{00}^c|
\le |A_{00}^c|\left(\frac{d}{4} + \lambda\right).  
\end{align*}

On the other hand, $|E(A_{00}, A_{00}^c)| + |E(A_{00}^c, A_{00}^c)| =
d |A_{00}^c|$ and so $|E(A_{00}, A_{00}^c)| \ge |A_{00}^c|
(\frac{3d}{4} - \lambda)$. Since $\lambda \le d/4$, $|E(A_{00},
A_{00}^c)| \ge \frac{d}{2} |A_{00}^c|$.  Supposing that $A_{00}^c$ is
non-empty, there must be at least one vertex $v \in A_{00}^c$ with
more than half of its neighbors in $A_{00}$. But then the definition
of majority dynamics would imply that $v$ converges to $0$, a
contradiction. Thus $A_{00}^c$ must be empty, and all agents converge
to 0.
\end{proof}

In particular, a random $d$-regular graph has $\lambda = O(\sqrt d)$
with high probability.  Therefore, if we start with an initial bias
such that $\P(0) - \frac{1}{2} \gtrsim d^{-1/2}$ then iterated
majority on a random $d$-regular graph will converge to all $0$ with
high probability.

\subsection{Plurality dynamics on expanders}

The results of the previous section can be extended with little effort
to the case of more than two alternatives. The main obstacle in making
this extension is specifying the resolution of ties. With two
alternatives, we avoid the possibility of ties in majority dynamics
simply by requiring each vertex to have odd degree. With more than two
alternatives, the simplest way to avoid ties is to perturb the edge
weights slightly so that they are rationally independent.  Our
expansion assumptions can be easily extended to the weighted case: let
$M$ be the weighted adjacency matrix of $G$ and assume that all of its
entries on or above the main diagonal are rationally independent of
one another. Let $d$ be the largest absolute eigenvalue of $M$ and let
$\lambda$ be the second-largest.  Note that if $M$ was constructed by
perturbing the edge weights of a random regular graph, then $d$ will
be approximately the degree of the graph and $\lambda$ will be
$O(\sqrt d)$.

With the assumptions above, Lemma~\ref{lem:mixing} holds exactly as
it was stated above, and so the proof of Proposition~\ref{prop:unstable}
applies also.

\begin{proposition}\label{prop:unstable-q}
For $a \in [q]$, let $N_a(t)$ be the number of people that vote $a$
at time $t$. If $N_a(t) \ge \frac{1 + \alpha}{2} n$ then
$N_a(t+1) \ge n(1 - \frac{2\lambda^2}{\alpha^2 d^2})$
\end{proposition}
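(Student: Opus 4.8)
The plan is to mirror the proof of Proposition~\ref{prop:unstable}, with the set of voters \emph{not} voting $a$ playing the r\^ole that $A_1(t)$ played in the two-alternative case. Write $A_a(t)$ for the set of voters who vote $a$ at time $t$ and let $A_a^c(t) = V \setminus A_a(t)$ be its complement, so that $|A_a^c(t)| = n - N_a(t)$ and the hypothesis $N_a(t) \ge \frac{1+\alpha}{2}n$ is exactly the statement that $|A_a(t)| - |A_a^c(t)| \ge \alpha n$. The conclusion we want, $N_a(t+1) \ge n\bigl(1 - \frac{2\lambda^2}{\alpha^2 d^2}\bigr)$, is equivalent to the bound $|A_a^c(t+1)| \le \frac{2\lambda^2}{\alpha^2 d^2}n$, so it suffices to control the size of $A_a^c(t+1)$.

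The key step, and the only genuinely new idea compared with Proposition~\ref{prop:unstable}, is the observation that one should compare $a$ against \emph{all other alternatives taken together} rather than against any single competitor. Fix $v \in A_a^c(t+1)$, so $v$ adopts some $b \ne a$ at time $t+1$. Since ties are broken by the rationally-independent perturbation of the edge weights, $b$ is the \emph{strict} plurality winner in $v$'s neighborhood, and in particular its weighted edge-mass exceeds that of $a$: $E(\{v\}, A_b(t)) > E(\{v\}, A_a(t))$. Because $A_b(t) \subseteq A_a^c(t)$, this yields $E(\{v\}, A_a^c(t)) \ge E(\{v\}, A_a(t))$. Summing over all $v \in A_a^c(t+1)$ gives $E(A_a^c(t+1), A_a^c(t)) \ge E(A_a^c(t+1), A_a(t))$, which is the exact analogue of the inequality $E(A_1(t+1), A_1(t)) \ge E(A_1(t+1), A_0(t))$ driving the two-alternative proof.

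From here the argument is identical to that of Proposition~\ref{prop:unstable}. I apply Lemma~\ref{lem:mixing} (which, as noted in the text, holds verbatim in the weighted setting) to both sides: an upper bound on $E(A_a^c(t+1), A_a^c(t))$ and a lower bound on $E(A_a^c(t+1), A_a(t))$. Using $|A_a(t)| - |A_a^c(t)| \ge \alpha n$ to cancel the leading $\frac{d}{n}$-terms, dividing by $\sqrt{|A_a^c(t+1)|}$ (the conclusion being trivial when this set is empty), and bounding $\sqrt{|A_a^c(t)|} + \sqrt{|A_a(t)|} \le \sqrt{2n}$ by concavity, I arrive at $\alpha\, d\, \sqrt{|A_a^c(t+1)|} \le \lambda\sqrt{2n}$, exactly as in the displayed chain of that proof. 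Squaring gives $|A_a^c(t+1)| \le \frac{2\lambda^2}{\alpha^2 d^2}n$, which is the claim. I expect no real obstacle beyond the grouping observation above: once all non-$a$ alternatives are merged into $A_a^c$, the two-alternative computation transfers without change, and the only point requiring care is that the perturbed-weight tie-breaking is what legitimizes the strict comparison $E(\{v\}, A_b(t)) > E(\{v\}, A_a(t))$.
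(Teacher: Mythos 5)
Your proof is correct and is essentially the paper's own argument: the paper proves Proposition~\ref{prop:unstable-q} simply by asserting that the weighted mixing lemma holds and that ``the proof of Proposition~\ref{prop:unstable} applies also,'' and your write-up is exactly that adaptation, with the one necessary new observation (grouping all alternatives $b \ne a$ into $A_a^c(t)$, justified by the tie-free strict comparison $E(\{v\}, A_b(t)) > E(\{v\}, A_a(t))$) made explicit.
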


To get an extension of Proposition~\ref{prop:consensus}, we first
need to extend the periodicity result~\cite{GO:80} to the case of
several alternatives. This extension uses exactly the same argument
as~\cite{GO:80}, but we include it for completeness.

\begin{proposition}\label{prop:periodic-q}
  On a weighted graph with no ties, iterated plurality dynamics
  converge to a cycle of length at most 2.
\end{proposition}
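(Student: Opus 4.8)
The plan is to adapt the classical argument of Goles and Olivares~\cite{GO:80} for two-state majority dynamics to the plurality setting, via a monovariant (Lyapunov-type) energy function whose behavior forces the period-two convergence. The key idea in the original argument is to define a quadratic energy that is nonincreasing along the dynamics when comparing states two steps apart, and then to exploit the fact that the dynamics take values in a finite set, so a bounded monovariant must stabilize. First I would set up notation: let $M$ be the (symmetric, weighted, tie-free) adjacency matrix, and for each alternative $a \in [q]$ encode the configuration at time $t$ by the indicator vectors $\ind_a(t) \in \{0,1\}^V$, where $\ind_a(t)_v = 1$ iff $X_v(t) = a$. The plurality update says that $v$ adopts the alternative $a$ maximizing $(M\ind_a(t))_v = \sum_{w} M_{vw}\ind_a(t)_w$, i.e., the weighted count of neighbors voting $a$; the rational-independence assumption guarantees this argmax is unique, so there are no ties to resolve.

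The main step is to define the energy functional comparing consecutive configurations,
\begin{equation*}
  \Phi(t) = -\sum_{a \in [q]} \ind_a(t-1)^T M \ind_a(t),
\end{equation*}
which measures (negatively) the total weight of edges whose two endpoints agree across a single time step, and to show that $\Phi(t+1) \le \Phi(t)$ with equality only on a period-two cycle. The proof of monotonicity is where the plurality optimality enters: for each vertex $v$, the update chooses $X_v(t+1)$ to maximize the weighted agreement with the configuration at time $t$, so
\begin{equation*}
  \sum_a \ind_a(t+1)_v (M\ind_a(t))_v \ge \sum_a \ind_a(t)_v (M\ind_a(t))_v \quad\text{and}\quad \ge \sum_a \ind_a(t-1)_v (M\ind_a(t))_v.
\end{equation*}
Summing the appropriate such inequalities over $v$ and using the symmetry of $M$ to rewrite $\ind_a(t-1)^T M \ind_a(t) = \ind_a(t)^T M \ind_a(t-1)$ yields $\Phi(t+1) - \Phi(t) \le 0$. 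Since $\Phi$ takes finitely many values (the state space is finite) and is bounded, it must eventually be constant; then I would analyze the equality case to conclude that once $\Phi$ stabilizes, the vertex-wise optimality inequalities must all be tight, which forces $X_v(t+2) = X_v(t)$ for every $v$, i.e., a cycle of length dividing two.

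The hard part will be handling the equality/tightness analysis cleanly in the $q > 2$ setting. For two alternatives the argmax over $\{0,1\}$ collapses to a sign comparison and the equality case is a short computation; with $q$ alternatives each vertex optimizes over a larger set, so I must argue carefully that a tight monovariant forces the argmax achieved at time $t+1$ (against neighbors at time $t$) to coincide with the choice made at time $t-1$ (against the same neighbors at time $t$), which by uniqueness of the argmax gives $\ind_a(t+1) = \ind_a(t-1)$ for all $a$. The rational-independence (no-ties) hypothesis is essential precisely here, since it rules out the degenerate case where two distinct alternatives realize the same maximal weighted neighbor count and the monovariant could stall without periodicity. I would therefore state the no-tie condition explicitly at the start and invoke it at the final step to pin down the argmax uniquely, completing the reduction to the two-step periodicity claim exactly as in~\cite{GO:80}.
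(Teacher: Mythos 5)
Your proposal is correct and takes essentially the same approach as the paper: your energy $\Phi(t)$ is (up to sign and an index shift) the paper's $L(t-1)$, your monotonicity claim $\Phi(t+1)\le\Phi(t)$ is precisely the paper's statement $J(t)=L(t)-L(t-1)\ge 0$, and both arguments use vertex-wise plurality optimality together with the no-tie hypothesis to force $X_v(t+1)=X_v(t-1)$ in the equality case. The only cosmetic difference is the finishing step---you let the nonincreasing monovariant stabilize on the finite state space, while the paper first passes to the eventual cycle and telescopes $J$ over one period---and these are interchangeable.
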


\begin{proof}
  Consider the quantity
  \begin{align*}
    J_v(t) = \sum_{a \in [q]} \left((1_{\{X_v(t+1) = a\}} - 1_{\{X_v(t-1) = a\}}) \sum_{w \sim v} e_{wv} 1_{\{X_w(t) = a\}}\right),  
  \end{align*}
  where $e_{wv}$ is the weight of the edge between $v$ and $w$.
  Note that $J_v(t) \ge 0$ with equality if, and only if, $X_v(t+1) = X_v(t-1)$.
  Indeed, if $X_v(t+1) = X_v(t-1)$ then $J_v(t) = 0$ trivially, so suppose
  that $X_v(t+1) = a$ and $X_v(t-1) = b \ne a$.
  Then
  \begin{align*}
    J_v(t) = \sum_{\{w \sim v: X_w(t) = a\}} e_{wv} 
    - \sum_{\{w \sim v: X_w(t) = b\}} e_{wv}.
  \end{align*}
  Since $X_v(t+1) = a$ and the edge weights are chosen to ensure that
  ties never happen, this implies that $J_v(t) > 0$.
  
  Now consider $J(t) = \sum_v J_v(t)$. Note that if we define
  \begin{align*}
    L(t) = \sum_v \sum_{w \sim v} \sum_{a \in [q]} e_{wv} 1_{\{X_v(t+1) = a\}} 1_{\{X_w(t) = a\}}  
  \end{align*}
  then $J(t) = L(t) - L(t-1)$. Since the state space of the dynamics is
  finite and the dynamics are deterministic, the process eventually (by
  time $T$, say) converges to a cycle (of period $k$, say). Then
  \begin{align*}
    \sum_{t = T+1}^{T+k} J(t) = \sum_{t=T+1}^{T+k} L(t) -
    \sum_{t=T}^{T+k-1} L(t) = 0,
  \end{align*}
  since the states are identical at time $T$ and $T+k$, and thus $L(T) =
  L(T+k)$.  Since $J(t) \ge 0$ for every $t$, it follows that $J(T+1) =
  0$. Then $J_v(T+1) = 0$ for every $v$ and so the state at time $T+2$
  is identical to the state at time $T$.
\end{proof}

With Proposition~\ref{prop:periodic-q} in hand, the rest of the proof
of Proposition~\ref{prop:consensus} goes through in the $q$-alternative
case. We only note that we need to replace $A_{01}$ by the set
$A_{a*} = \{v : X_v(2t) = a \ne X_v(2t+1) \text{ for large enough } t\}$.

\begin{proposition}\label{prop:consensus-q}
If $\frac{\lambda}{d} \le \frac{3}{16}$ and $N_a(t) \ge n(\frac{1}{2} + \frac{2\lambda}{d})$
for some $t$ then the plurality dynamics converge to $a$.
\end{proposition}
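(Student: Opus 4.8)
The plan is to mirror the proof of Proposition~\ref{prop:consensus}, substituting Proposition~\ref{prop:unstable-q} for Proposition~\ref{prop:unstable} and Proposition~\ref{prop:periodic-q} for the binary periodicity result, and checking that the one genuinely $q$-dependent step (the final plurality argument) still closes.

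First I would establish the $q$-alternative analogue of Corollary~\ref{cor:almost-consensus}, namely that the stated majority for $a$ is self-sustaining. Applying Proposition~\ref{prop:unstable-q} with $\alpha = \frac{4\lambda}{d}$, the hypothesis $N_a(t) \ge n\left(\frac{1}{2} + \frac{2\lambda}{d}\right) = \frac{1+\alpha}{2} n$ yields $N_a(t+1) \ge n\left(1 - \frac{2\lambda^2}{\alpha^2 d^2}\right) = \frac{7n}{8}$. Since $\frac{\lambda}{d} \le \frac{3}{16}$ forces $\frac{7n}{8} \ge n\left(\frac{1}{2} + \frac{2\lambda}{d}\right)$, the hypothesis reproduces itself at time $t+1$, and so $N_a(s) \ge \frac{7n}{8}$ (equivalently, fewer than $\frac{n}{8}$ agents vote something other than $a$) for every $s \ge t+1$.

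Next, by Proposition~\ref{prop:periodic-q} the dynamics eventually settle into a cycle of period at most two, so for all large $t$ each vertex is described by the pair $(X_v(\text{even}), X_v(\text{odd}))$. Following the hint, I would partition $V$ relative to $a$ into $A_{aa}$ (vertices converging to $a$), $A_{a*} = \{v : X_v(2t) = a \ne X_v(2t+1) \text{ for large } t\}$, its even/odd mirror image $A_{*a}$ (which votes $a$ only at odd times), and the remainder $C$ of vertices that never vote $a$ in the cycle. The bound $N_a(s) \ge \frac{7n}{8}$ applied at large even times gives $|A_{*a}| + |C| \le \frac{n}{8}$, and at large odd times gives $|A_{a*}| + |C| \le \frac{n}{8}$; adding these shows $|A_{aa}^c| = |A_{a*}| + |A_{*a}| + |C| \le \frac{n}{4}$, exactly the bound used in the binary case.

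From here the argument of Proposition~\ref{prop:consensus} transfers almost verbatim. Lemma~\ref{lem:mixing} together with $|A_{aa}^c| \le \frac{n}{4}$ gives $E(A_{aa}^c, A_{aa}^c) \le |A_{aa}^c|\left(\frac{d}{4} + \lambda\right)$, and since the edge-ends incident to $A_{aa}^c$ total $d|A_{aa}^c|$, this forces $E(A_{aa}, A_{aa}^c) \ge |A_{aa}^c|\left(\frac{3d}{4} - \lambda\right) > \frac{d}{2}|A_{aa}^c|$, using $\lambda < d/4$. If $A_{aa}^c$ were non-empty, averaging would produce a vertex $v \in A_{aa}^c$ more than half of whose incident edge-weight leads into $A_{aa}$. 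The single place where $q > 2$ requires a remark rather than a literal copy is this final contradiction, which I expect to be the main (if minor) obstacle: for such a $v$, at every large time more than half of its neighbors' weight votes $a$, so $a$ is the strict weighted plurality and $v$ is forced to vote $a$ at the next step — hence at all large times — placing $v \in A_{aa}$, a contradiction. The key observation making the binary step upgrade cleanly is that a strict weighted majority is automatically the plurality. Therefore $A_{aa}^c = \emptyset$ and the dynamics converge to $a$.
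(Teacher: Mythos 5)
Your proposal is correct and follows essentially the same route as the paper, which simply asserts that the proof of Proposition~\ref{prop:consensus} carries over once $A_{01}$ is replaced by $A_{a*}$: you instantiate Proposition~\ref{prop:unstable-q} with $\alpha = \frac{4\lambda}{d}$ to get the self-sustaining $\frac{7n}{8}$ majority, use Proposition~\ref{prop:periodic-q} for the period-two decomposition, and rerun the expander mixing contradiction. Your explicit remark that a strict weighted majority for $a$ in a vertex's neighborhood forces $a$ to be the plurality choice is exactly the detail the paper leaves implicit, and it closes the argument correctly.
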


In particular, if we take a random $d$-regular graph and perturb each edge
weight by at most $n^{-3}$, then the second eigenvalue will hardly change,
so we will still have $\lambda = O(\sqrt d)$. If $\P(X_v(0) = a) \ge
\P(X_v(0) = b) + \frac{c \sqrt {\log q}}{\sqrt d}$ for every $b \ne a$ then at time $t=1$,
with high probability most
of the vertices will prefer $a$ and Proposition~\ref{prop:consensus-q}
will imply that the plurality dynamics will converge to all $a$.

\subsection{A stronger result for expanders with large girth}
In Section~\ref{sec:majority-expanders} we proved that in majority
dynamics with two alternatives, an initial bias of $d^{-1/2}$ is
sufficient (on a random $d$-regular graph) for consensus in the
limit. Kanoria and Montanari~\cite{KM:trees} showed that on an
infinite $d$-regular tree, the required bias is much smaller as a
function of $d$:

\begin{theorem}[Kanoria and Montanari]\label{thm:mont-yash}
  Let $v$ be a vertex in an infinite $d$-regular tree. For any $\beta
  > 0$ and all sufficiently large $d$, if $\P(0) \ge \frac{1}{2} +
  d^{-\beta}$ then with probability one, $X_v(t) = 0$ for all
  sufficiently large $t$.
\end{theorem}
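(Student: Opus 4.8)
The plan is to exploit the loopless structure of the infinite $d$-regular tree to reduce the analysis to a one-dimensional recursion for the bias, show that this recursion amplifies an arbitrarily small polynomial bias to a consensus, and then upgrade convergence in probability to an almost-sure statement. Throughout I assume $d$ is odd (as elsewhere in the paper) so that majority updates never tie, and I recall that alternative $0$ is the favoured one, so $p_0 := \P(0) > \half$.

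First I would set up the cavity recursion. Root the tree at $v$. For a rooted subtree whose root has $d-1$ children, let $p_t$ be the probability that this cavity root is in state $0$ at time $t$ when it updates using only its children (up to a fixed tie-breaking convention, which I suppress). Because deleting the parent edge disconnects the tree, the child subtrees are genuinely independent, so the cavity process obeys the exact recursion $p_{t+1}=\P[\Binom(d-1,p_t)>(d-1)/2]$ with $p_0=\P(0)$, and the true marginal at the root is $\P[X_v(t)=0]=\P[\Binom(d,p_{t-1})>d/2]$, the majority of $d$ independent neighbour-subtrees.

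Second, I would analyze the amplification map $\Psi(p)=\P[\Binom(d,p)>d/2]$. Its fixed points are $0$, $\half$, and $1$, with $\half$ unstable and $0,1$ super-attracting, and a direct computation gives $\Psi'(\half)=\Theta(\sqrt d)$. Hence, writing $b_t=p_t-\half$, in the linear regime $|b_t|\lesssim d^{-1/2}$ one has $b_{t+1}\approx c\sqrt d\, b_t$, so starting from $b_0=d^{-\beta}$ the bias reaches order $d^{-1/2}$ within $\Ordo(\beta)$ rounds (a number bounded for fixed $\beta$ and large $d$). Once $p_t$ exceeds a constant, Hoeffding's inequality forces $1-p_t$ to decay doubly-exponentially to $0$, so $\sum_t (1-p_t)<\infty$. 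Transferring this to the almost-sure statement is then immediate: since $p_0>\half$ places the recursion in the basin of the all-$0$ fixed point, we get $\sum_t \P[X_v(t)=1]<\infty$, and the first Borel--Cantelli lemma yields that almost surely $X_v(t)=1$ for only finitely many $t$, i.e. $X_v(t)=0$ for all large $t$, with probability exactly one.

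The main obstacle is the gap between the cavity process and the true dynamics, and this is where Kanoria--Montanari do the real work. In the true dynamics a vertex also updates from its parent, whose state depends, one step earlier, on the vertex itself; this produces time-correlations along each edge that the naive independent binomial recursion above ignores, so the cavity recursion is not literally the law of the true trajectory. I would control this by monotonicity: majority dynamics is monotone in the initial configuration and preserves positive association, so one can sandwich the true marginal between cavity-type processes obtained by replacing the incoming parent trajectory with a stochastically dominating or dominated boundary trajectory, and then check that the resulting two-sided recursions still have slope $\Theta(\sqrt d)$ at $\half$ and the same super-attracting behaviour at the endpoints for large $d$. Making this precise at the level of the trajectory-valued cavity recursion, rather than only the time-$t$ marginal, so that the correlations do not degrade the amplification, is the technical heart of the proof.
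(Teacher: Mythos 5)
First, a point of order: the paper does not prove this statement at all --- it is an external theorem, quoted with attribution from Kanoria and Montanari~\cite{KM:trees}, and used only as a black box in the corollary that follows it. So there is no proof in the paper to compare yours against; I can only assess your outline on its own merits and against the actual argument of~\cite{KM:trees}. The ``easy'' part of your outline is fine: the idealized recursion $p_{t+1}=\P[\Binom(d-1,p_t)>(d-1)/2]$ does have slope $\Theta(\sqrt d)$ at the fixed point $\half$, an initial bias $d^{-\beta}$ would reach constant size in $O(\beta)$ rounds and then decay doubly exponentially, and Borel--Cantelli does convert summable marginals $\P[X_v(t)=1]$ into the almost-sure fixation statement. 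You also correctly identify the crux: the recursion is not the law of the true process because of feedback along each edge. (In fact the problem is worse than you say: the recursion is not exact even for the cavity process you define, since the cavity root still feeds back into its own children from time $2$ onward; it is exact only for the fully directed dynamics in which \emph{every} vertex ignores its parent.)

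The genuine gap is in your proposed repair. Sandwiching by monotonicity means replacing the parent's trajectory by a pathwise dominating or dominated one, and absent further information the only such trajectories available are the constant all-$0$ and all-$1$ ones. But forcing the parent adversarially to $1$ shifts the majority threshold among the remaining $d-1$ children by one vote, which costs $\P[\Binom(d-1,\half)=(d-1)/2]=\Theta(d^{-1/2})$ in the update probability. The lower sandwich process therefore obeys only $b_{t+1}\ge c_1\sqrt d\, b_t - c_2 d^{-1/2}$, which amplifies only once $b_t \gtrsim 1/d$. Starting from $b_0=d^{-\beta}$ with $\beta\ge 1$, the subtracted term dominates at the very first step and the sandwich collapses, so your argument can prove the theorem only for $\beta<1$, not for all $\beta>0$ as stated. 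Closing exactly this gap is the content of the dynamic cavity method of~\cite{KM:trees}: rather than worst-casing the parent, one works with the exact joint law of parent--child \emph{trajectories} (using that, conditionally on a vertex's entire trajectory, the subtrees hanging off it are independent), and shows that the parent's vote --- correlated with the children but nearly fair --- perturbs the idealized $\sqrt d$-amplification only at lower order, for any fixed number of rounds as $d\to\infty$. That analysis is not a routine strengthening of your sandwich; it is the theorem.
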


Using this, it is easy to improve our earlier bias requirement for
consensus from $\P(0) - \frac{1}{2} \gtrsim d^{-1/2}$ to $\P(0) -
\frac{1}{2} \gtrsim d^{-\beta}$ for any $\beta > 0$:

\begin{corollary}
  For every $d$, let $G_{n,d}$ be a sequence of $d$-regular
  $\lambda$-expanders with $\frac{\lambda}{d} \le \frac{3}{16}$, such
  that the girth of $G_{n,d}$ tends to infinity with $n$.  For any
  $\beta > 0$, if $p \ge \frac{1}{2} +d^{-\beta}$ then for all
  sufficiently large $d$, with high probability (as $n \to \infty$)
  the iterated majority process on $G_{n,d}$ will converge to all $0$.
\end{corollary}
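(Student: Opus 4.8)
The plan is to use the large girth to reduce the question on the finite expander to the computation on the infinite tree (Theorem~\ref{thm:mont-yash}), and then to invoke the consensus statement for expanders (Proposition~\ref{prop:consensus}) to turn a moderate majority at a fixed time into full convergence. The key structural observation is that majority dynamics are \emph{local in time}: for a fixed number of rounds $T$, the value $X_v(T)$ depends only on the initial opinions $(X_w(0))$ with $w$ in the ball $B_T(v)$ of radius $T$ around $v$. Once the girth of $G_{n,d}$ exceeds $2T+1$, this ball carries no cycle and is therefore isomorphic to the radius-$T$ ball in the infinite $d$-regular tree. Since the initial opinions are i.i.d.\ from $\P$, the law of $X_v(T)$ on $G_{n,d}$ then coincides exactly with the law of $X_o(T)$ for the root $o$ of the infinite tree, and by symmetry this common probability $p_T := \P(X_o(T) = 0)$ is the same for every vertex $v$.

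First I would fix $\beta > 0$ and take $d$ large enough that Theorem~\ref{thm:mont-yash} applies. Since $p \ge \tfrac{1}{2} + d^{-\beta}$, on the infinite tree we have $X_o(t) = 0$ for all sufficiently large $t$ almost surely; because this is an almost sure eventual event, Fatou's lemma forces $\liminf_T \P(X_o(T) = 0) = 1$, i.e.\ $p_T \to 1$ as $T \to \infty$. As $\lambda/d \le \tfrac{3}{16}$ gives $2\lambda/d \le \tfrac{3}{8}$, so that $\tfrac12 + \tfrac{2\lambda}{d} \le \tfrac{7}{8}$, I can fix a single $T$ (depending only on $d$) with $p_T > \tfrac{7}{8} \ge \tfrac12 + \tfrac{2\lambda}{d}$, leaving a constant margin $\gamma := p_T - \tfrac12 - \tfrac{2\lambda}{d} > 0$. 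For every $n$ whose girth exceeds $2T+1$, the tree identification above yields $\E[N_0(T)] = p_T\, n$.

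The second step is concentration. The random variable $N_0(T) = \sum_{v} 1_{\{X_v(T) = 0\}}$ is a function of the i.i.d.\ initial opinions $(X_w(0))_{w \in V}$, and altering a single coordinate $X_w(0)$ can only change those $X_v(T)$ with $v \in B_T(w)$; the number of such $v$ is at most the size $B_T$ of a radius-$T$ ball in the $d$-regular tree, a constant for fixed $d$ and $T$. McDiarmid's bounded-differences inequality therefore gives $\P\big(|N_0(T) - p_T n| \ge \tfrac{\gamma}{2} n\big) \le 2\exp(-c_{d,T}\, n)$ with $c_{d,T} = \gamma^2/(2 B_T^2)$ independent of $n$, which tends to $0$ as $n \to \infty$. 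Hence with high probability $N_0(T) \ge \big(\tfrac12 + \tfrac{2\lambda}{d}\big) n$, equivalently $N_0(T) - N_1(T) \ge \tfrac{4\lambda n}{d}$. Since also $\lambda/d \le \tfrac{3}{16}$, Proposition~\ref{prop:consensus} applies at time $T$ and the dynamics converge to all $0$, as required.

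The main obstacle I anticipate is the concentration step together with the correct ordering of limits. The indicators $1_{\{X_v(T) = 0\}}$ are genuinely dependent, so one must quantify the dependence (here through the bounded radius of influence $B_T$) and then argue via McDiarmid. Crucially, $d$ and $T$ are chosen and then held \emph{fixed} before sending $n \to \infty$: this is what makes $B_T$ a constant, keeps the McDiarmid failure probability exponentially small in $n$, and simultaneously lets the growing girth both validate the tree approximation and push $p_T$ above $\tfrac12 + \tfrac{2\lambda}{d}$. Getting this quantifier order right, rather than the concentration inequality itself, is the delicate point.
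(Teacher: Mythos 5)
Your proof follows essentially the same route as the paper's: locality of majority dynamics plus diverging girth identifies the law of $X_v(T)$ with that of the root on the infinite $d$-regular tree, Theorem~\ref{thm:mont-yash} supplies a large bias at a fixed time $T$, McDiarmid's bounded-differences inequality concentrates $N_0(T)$, and Proposition~\ref{prop:consensus} converts the resulting majority into convergence to all $0$. The one difference is minor and in your favor: you choose $T$ so that $p_T > \tfrac{7}{8} \ge \tfrac12 + \tfrac{2\lambda}{d}$, which works for any $\lambda/d \le \tfrac{3}{16}$, whereas the paper targets a bias of $\tfrac12 + C/\sqrt{d}$ and thus implicitly relies on $\lambda = O(\sqrt d)$ when invoking Proposition~\ref{prop:consensus}.
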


\begin{proof}
  Choose $d$ large enough (depending on $\beta$) so that
  Theorem~\ref{thm:mont-yash} applies, then choose $T$ large enough so
  that $\P(X_v(T) = 0) \ge \frac{1}{2} + \frac{C}{\sqrt d}$ on the
  $d$-regular tree, for some constant $C$ to be determined.  By
  choosing $n$ large enough, we can ensure that the girth of $G_{n,d}$
  is larger than $T$; thus $\P(X_v(T) = 0) \ge \frac{1}{2} +
  \frac{C}{\sqrt d}$ for every $v \in G_{n,d}$. Then the expected
  fraction of nodes that are $0$ by time $T$ is at least $\frac{1}{2}
  + \frac{C}{\sqrt d}$, since at time $T$ each node only depends on
  the initial values of nodes within a ball of radius $T$. Since the
  number of such nodes is bounded as $n \to \infty$, McDiarmid's
  inequality~\cite{mcdiarmid1989method} implies that with high
  probability, at least $\frac{1}{2} + \frac{C - 1}{\sqrt d}$ fraction
  of nodes are $0$ at time $T$. If we choose $C$ large enough,
  Proposition~\ref{prop:consensus} implies that the dynamics converge
  to all $0$.
\end{proof}

\newpage
\FullVersionOnly{\bibliographystyle{abbrv} \bibliography{majority}}
\end{document}